\definecolor{webbrown}{rgb}{0.65, 0.16, 0.16}
\numberwithin{equation}{section}
\newcommand{\ZZ}{\mathbb{Z}}
\newcommand{\QQ}{\mathbb{Q}}
\newcommand{\CC}{\mathbb{C}}
\newcommand{\ch}{{\rm ch}}
\newcommand{\Ch}{{\Omega}}
\newcommand{\stirlingfirst}[2]{\genfrac{[}{]}{0pt}{}{#1}{#2}}
\newcommand{\stirlingsecond}[2]{\genfrac{\{}{\}}{0pt}{}{#1}{#2}}
\DeclareMathOperator{\Aut}{Aut}
\newcommand{\rank}{{\rm rank}}
\newcommand{\MV}{\mathsf{MV}}
\theoremstyle{plain}
\newtheorem{thm}{Theorem}[section]
\newtheorem{prop}[thm]{Proposition}
\newtheorem{lem}[thm]{Lemma}
\newtheorem{cor}[thm]{Corollary}
\theoremstyle{definition}
\newtheorem{rem}[thm]{Remark}
\title[Harer--Zagier via intersection theory]{An intersection-theoretic proof of the Harer--Zagier formula}
\author{Alessandro Giacchetto }
\address{A.~G.: 
	Universit\'e Paris-Saclay, CNRS, CEA, Institut de physique th\'eorique, 91191 Gif-sur-Yvette, France 
	\emph{\&} 
	Max Planck Institut fur Mathematik, Vivatsgasse 7, 53111 Bonn, Germany
}
\email{alessandro.giacchetto@ipht.fr}
\author{Danilo Lewa\'{n}ski }
\address{D.~L.: 
	Universit\'e de G\`{e}neve, Section de Math\'{e}matiques, 24, rue du G\'{e}n\'{e}ral Dufour, 1211 Geneva 4, Switzerland 
	\emph{\&} 
	Universit\'e Paris-Saclay, CNRS, CEA, Institut de physique th\'eorique, 91191 Gif-sur-Yvette, France 
	\emph{\&} 
	Institut des Hautes \'Etudes Scientifiques, le Bois-Marie, 35 route de Chartres, 91440 Bures-sur-Yvette, France 
}
\email{danilo.lewanski@ipht.fr}
\author{Paul Norbury }
\address{P.~N.: 
	School of Mathematics and Statistics, University of Melbourne, Australia 3010
}
\email{norbury@unimelb.edu.au}
\date{}
\subjclass[2010]{14N10, 14H10, 14H60, 05A15}
\keywords{moduli of curves, Euler characteristic, Harer--Zagier formula, interection theory, Omega-classes}
\begin{document}

\begin{abstract}
	We provide an intersection-theoretic formula for the Euler characteristic of the moduli space of smooth curves. 
	%
	This formula reads purely in terms of Hodge integrals and, as a corollary, the standard calculus of tautological classes gives a new short proof of the Harer--Zagier formula. 
	%
	Our result is based on the Gauss--Bonnet formula, and on the observation that a certain parametrisation of the $\Omega$-class \textemdash~the Chern class of the universal $r$-th root of the twisted log canonical bundle \textemdash~provides the Chern class of the log tangent bundle to the moduli space of smooth curves.
	%
	Being $\Omega$-classes by now employed in many enumerative problems, mostly recently found and at times surprisingly different from each other, we dedicate some work to produce an extensive list of their general properties: extending existing ones, finding new ones, and writing down some only known to the experts.
\end{abstract}

\maketitle

\section{Introduction and results}
\label{sec:intro}

Let $\mathcal{M}_{g,n}$ be the moduli space of smooth curves of genus $g$ with $n$ labeled and distinct marked points. Moduli spaces of curves are a topic of great interest both within the pure algebraic geometry and, arguably even more charmingly, in the relation between algebraic geometry and different branches of mathematics and physics: string theory, mirror symmetry, Gromov--Witten theory, random matrix models, integrable systems and integrable hierarchies, as well as recent methods such as topological recursion in the sense of Eynard and Orantin.

\medskip

The Euler characteristic $\chi_{g,n}$ of $\mathcal{M}_{g,n}$ represents one of the most fundamental invariants of these spaces, and is computed by the famous Harer--Zagier formula.

\begin{thm}[{\emph{Harer--Zagier formula}, \cite{HZ86}, 1986}] \label{thm:HZ}
	The orbifold Euler characteristic of $\mathcal{M}_{g,n}$ is given by
	\begin{equation}
 		\chi_{g,n} =
 		\begin{cases}
			(-1)^{n-3}(n - 3)! & g = 0, n \ge 3, \\
			(-1)^n \frac{(n - 1)!}{12} & g = 1, n \ge 1, \\
			(-1)^n (2g - 3 + n)! \frac{B_{2g}}{2g (2g - 2)!} & g \ge 2, n \ge 0,
		\end{cases}
	\end{equation}
	where $B_{2g}$ are Bernoulli numbers.
\end{thm}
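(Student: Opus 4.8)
The plan is to combine the Gauss--Bonnet theorem with the description of the log tangent bundle of $\overline{\mathcal{M}}_{g,n}$ as an $\Omega$-class, and then to evaluate the resulting Hodge integral by standard tautological calculus. \textbf{Step~1 (Gauss--Bonnet).} Since $\mathcal{M}_{g,n}$ is the complement of the normal crossings boundary divisor $D$ in the smooth proper Deligne--Mumford stack $\overline{\mathcal{M}}_{g,n}$, the orbifold Gauss--Bonnet theorem gives
\begin{equation*}
	\chi_{g,n} = \int_{\overline{\mathcal{M}}_{g,n}} c_{3g-3+n}\bigl(T_{\overline{\mathcal{M}}_{g,n}}(-\log D)\bigr),
\end{equation*}
the top Chern number of the log tangent bundle; the few cases with $\dim\overline{\mathcal{M}}_{g,n}\le 1$, i.e.\ small genus $0$ and $1$, are checked directly.

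\textbf{Step~2 (log tangent bundle as an $\Omega$-class).} Kodaira--Spencer deformation theory identifies $T_{\overline{\mathcal{M}}_{g,n}}(-\log D)$ with $R^1\pi_*\bigl(\omega_{\pi,\log}^{\vee}\bigr)$, the derived pushforward along the universal curve $\pi\colon\overline{\mathcal{M}}_{g,n+1}\to\overline{\mathcal{M}}_{g,n}$ of the inverse of the relative log canonical bundle, with $R^0\pi_*$ vanishing for degree reasons. Its total Chern class is therefore, by definition, the $\Omega$-class for the parametrisation that takes $\omega_{\pi,\log}^{\vee}$ as the relevant root of the twisted log canonical bundle (here with $r=1$); this is the observation announced above. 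Feeding it into the Grothendieck--Riemann--Roch computation underlying $\Omega$-classes — Mumford's and Chiodo's formulas for $\pi$, together with the polynomiality of $\Omega$-classes in $r$ — rewrites $c\bigl(T_{\overline{\mathcal{M}}_{g,n}}(-\log D)\bigr)$ as an explicit universal polynomial in the $\psi$-classes at the markings, the $\kappa$-classes, and the odd Chern characters $\mathrm{ch}_{2k-1}(\mathbb{E})$ of the Hodge bundle; the Bernoulli numbers $B_{2k}$ enter here through the Todd genus in GRR.

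\textbf{Step~3 (evaluation).} Extracting the degree-$(3g-3+n)$ component and integrating, the dependence on $n$ is pinned down immediately by multiplicativity of the orbifold Euler characteristic along the forgetful map $\mathcal{M}_{g,n+1}\to\mathcal{M}_{g,n}$, whose fibre has Euler characteristic $2-2g-n$, so that $\chi_{g,n+1}=-(2g-2+n)\,\chi_{g,n}$ — on the intersection-theoretic side, essentially the dilaton equation applied to the $\psi$-contributions. This reduces the problem to $n\le 1$, where the $\kappa$- and boundary-terms collapse and one is left with a Hodge integral built from the top Hodge classes. The known closed forms for such integrals — the $\lambda_g$- and $\lambda_g\lambda_{g-1}$-evaluations, and the $\lambda_{g-1}^3$-integral for $n=0$, all consequences of Faber's intersection number conjecture — together with the elementary identity $\sum_{k\ge1}B_{2k}\,x^{2k}/(2k)! = \tfrac{x}{2}\coth\tfrac{x}{2}-1$, then produce $\chi_{g,1}=-B_{2g}/(2g)$ for $g\ge 2$ and the base values $\chi_{0,3}=1$, $\chi_{1,1}=-\tfrac1{12}$, which are exactly the three cases of the statement.

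I expect the crux to be Step~2: identifying the correct parameters of the $\Omega$-class and reconciling all the stacky and logarithmic normalisations so that its total Chern class equals $c\bigl(T_{\overline{\mathcal{M}}_{g,n}}(-\log D)\bigr)$ on the nose, with no spurious correction terms. Once that is in place, Step~3 is genuinely a short ``standard tautological calculus'' computation, the only subtlety being that the generic mechanism degenerates when $\lambda_g\lambda_{g-1}$ is not available, i.e.\ in genus $0$ and $1$, where the remaining finitely many cases are handled separately.
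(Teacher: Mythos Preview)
Your Steps~1 and~2 and the recursion $\chi_{g,n+1}=-(2g-2+n)\chi_{g,n}$ in Step~3 match the paper's approach: Gauss--Bonnet, identification of the log tangent bundle with $R^1\pi_*\omega_{\log}^{\vee}$ (i.e.\ the $\Omega$-class $\Omega(1,-1;0)$), and the reduction in $n$ via $\pi_*\psi_{n+1}=\kappa_0=2g-2+n$. Where you diverge, and where there is a genuine gap, is the evaluation for $g\ge 2$, $n=0$.

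The class you must integrate is $\Omega(1,-1;0)=\Lambda(-1)\exp\bigl(-\sum_{m\ge1}\kappa_m/m\bigr)$, and the $\kappa$-terms do \emph{not} collapse: for $g\ge 2$ the naked $\int_{\overline{\mathcal{M}}_{g,0}}\Lambda(-1)$ vanishes for degree reasons, so the entire answer comes from the $\kappa$-exponential. After trading the $\kappa$'s for $\psi$'s at extra marked points (as the paper does in its Lemma), what remains is
\[
\chi_{g,0}=\sum_{\ell\ge 1}\frac{1}{\ell!}\sum_{\mu_1,\dots,\mu_\ell\ge 1}\int_{\overline{\mathcal{M}}_{g,\ell}}\Lambda(-1)\prod_{i=1}^\ell\psi_i^{\mu_i+1},
\]
an infinite sum of linear Hodge integrals over all $\ell$ and all exponents. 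This is not a single $\lambda_g$-, $\lambda_g\lambda_{g-1}$-, or $\lambda_{g-1}^3$-integral, and Faber's intersection number conjecture does not evaluate it. The paper instead invokes the closed formula of Dubrovin--Yang--Zagier, which gives precisely $\frac{B_{2g}}{2g(2g-2)}$ for this sum. That is the missing ingredient in your Step~3; without it (or an equivalent evaluation of the full sum), the argument does not close.
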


This formula has been proved with quite different methods: via the combinatorics of polygon/graphs on surfaces and random matrix models, via representation theory, via topological recursion, via semi-infinite wedge formalism, see for instance \cite{Lew19,MPS21} and references therein.

\medskip

We provide the first expression of $\chi_{g,n}$ in terms of intersection theory of explicit tautological classes on the moduli spaces of stable curves, which is the main result of the paper.

\begin{thm}\label{thm:main}
	The Euler characteristic $\chi_{g,n}$ is given by the following Hodge integrals:
	\begin{equation}   \label{mainform}
		\chi_{g,n}
		=
		(-1)^{3g - 3 + n}
		\sum_{\ell \ge 0} \frac{1}{\ell!} \sum_{i=0}^g
			\int_{\overline{\mathcal{M}}_{g,n + \ell}} \frac{ \lambda_i}{ \prod_{j = 1}^{\ell} (1 + \psi_{n+j})} \psi_{n+1}^2 \cdots \psi_{n+\ell}^2.
	\end{equation}
\end{thm}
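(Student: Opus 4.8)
The plan is to express $\chi_{g,n}$ as the integral over $\overline{\mathcal{M}}_{g,n}$ of the top Chern class of the log tangent bundle, to recognise that class as a specialisation of the $\Omega$-class, and then to use the structural properties of $\Omega$-classes collected in the paper to unfold the right-hand side of \eqref{mainform}. First I would invoke the orbifold Gauss--Bonnet theorem: since $\mathcal{M}_{g,n}$ is a smooth Deligne--Mumford stack whose compactification $\overline{\mathcal{M}}_{g,n}$ has normal crossings boundary $\partial=\overline{\mathcal{M}}_{g,n}\setminus\mathcal{M}_{g,n}$, the top Chern class of the log tangent bundle $T_{\overline{\mathcal{M}}_{g,n}}(-\log\partial)$ computes the orbifold Euler characteristic of the open part, i.e.
\[
	\chi_{g,n}=\int_{\overline{\mathcal{M}}_{g,n}} c_{3g-3+n}\bigl(T_{\overline{\mathcal{M}}_{g,n}}(-\log\partial)\bigr).
\]
(The triples $(g,n)$ with $2g-2+n\le 0$ are exactly those absent from Theorem~\ref{thm:HZ}, so one may assume $2g-2+n>0$.) Here I would take care to use the version of Gauss--Bonnet valid for Deligne--Mumford stacks, so that the left-hand side is precisely the orbifold Euler characteristic occurring in Theorem~\ref{thm:HZ}.

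Next I would identify the log tangent bundle with an $\Omega$-class. With $\pi\colon\overline{\mathcal{C}}_{g,n}\to\overline{\mathcal{M}}_{g,n}$ the universal curve and $\omega_{\pi,\log}=\omega_\pi(x_1+\dots+x_n)$ the relative log canonical bundle, deformation theory gives $T_{\overline{\mathcal{M}}_{g,n}}(-\log\partial)=R^1\pi_*\omega_{\pi,\log}^{-1}$, while $R^0\pi_*\omega_{\pi,\log}^{-1}=0$ because $\omega_{\pi,\log}^{-1}$ has negative degree on every component of every stable curve when $2g-2+n>0$; hence $T_{\overline{\mathcal{M}}_{g,n}}(-\log\partial)=-R\pi_*\omega_{\pi,\log}^{-1}$ in $K$-theory and
\[
	c\bigl(T_{\overline{\mathcal{M}}_{g,n}}(-\log\partial)\bigr)=c\bigl(-R\pi_*\omega_{\pi,\log}^{-1}\bigr).
\]
But $\omega_{\pi,\log}^{-1}$ is exactly the universal $r$-th root of the twisted log canonical bundle in the case $r=1$, with the log canonical taken to the power $-1$ and all twisting parameters $a_i=0$; so the right-hand side is, by definition, the corresponding $\Omega$-class. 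This is the observation announced in the abstract, and it is what lets the machinery of $\Omega$-classes take over.

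I would then feed this specialisation into the explicit formula for the $\Omega$-class — Chiodo's formula, which at $r=1$ is Mumford's Grothendieck--Riemann--Roch computation for line bundles on the universal curve, one of the properties established in the body of the paper. It writes $c(-R\pi_*\omega_{\pi,\log}^{-1})$ as a product of a Hodge factor — which, recognising Mumford's formula for $\ch(\mathbb{E})$ run in reverse, is the total Chern class $\sum_{i=0}^g\lambda_i$ of the Hodge bundle — with a factor built from $\psi$-classes at the markings, $\kappa$-classes, and boundary pushforwards, all carrying Bernoulli-type coefficients. I would re-express the $\kappa$-classes and the boundary terms as pushforwards along the forgetful maps $\pi_\ell\colon\overline{\mathcal{M}}_{g,n+\ell}\to\overline{\mathcal{M}}_{g,n}$, using $\kappa_a=(\pi_1)_*\psi_{n+1}^{a+1}$ together with the standard resummation identity that converts $\exp\bigl(\sum_a t_a\kappa_a\bigr)$ into $\sum_{\ell\ge 0}\tfrac{1}{\ell!}(\pi_\ell)_*\bigl(\prod_{j=1}^\ell\phi(\psi_{n+j})\bigr)$. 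Tracking the precise generating series should yield the factor $\prod_{j=1}^\ell\frac{\psi_{n+j}^2}{1+\psi_{n+j}}$, the weight $\tfrac{1}{\ell!}$, and the overall sign $(-1)^{3g-3+n}$; integrating over $\overline{\mathcal{M}}_{g,n}$ and pushing everything up to $\overline{\mathcal{M}}_{g,n+\ell}$ then produces \eqref{mainform}.

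The hard part will be this last step: tracking every boundary contribution in the Grothendieck--Riemann--Roch computation of $R\pi_*\omega_{\pi,\log}^{-1}$, and verifying that after converting $\kappa$-classes into $\psi$-class pushforwards all the pieces assemble exactly into the stated series — in particular that the denominators $1+\psi_{n+j}$ and numerators $\psi_{n+j}^2$ appear with the correct multiplicities, that the $\psi$-classes at the original markings cancel, and that $\tfrac{1}{\ell!}$ is the right symmetry factor. A secondary point to pin down is the compatibility of the orbifold Gauss--Bonnet normalisation with the Harer--Zagier normalisation of $\chi_{g,n}$, which fixes the convention under which the two theorems are equivalent.
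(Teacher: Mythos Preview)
Your approach matches the paper's: Gauss--Bonnet gives $\chi_{g,n}=\int_{\overline{\mathcal{M}}_{g,n}}\Omega(1,-1;0)$, and Chiodo's formula together with the $\kappa$-to-$\psi$ pushforward lemma unfolds this into \eqref{mainform}. One clarification on what you flag as the hard part: at $r=1$ the boundary and marked-point contributions in Chiodo's formula coincide exactly with those in Mumford's formula for the Hodge bundle, so the expansion collapses to $\Omega(1,-1;0)=\Lambda(-1)\exp\bigl(-\sum_{m\ge 1}\tfrac{1}{m}\kappa_m\bigr)$ with no residual boundary terms to track, and \eqref{mainform} then follows from the pushforward lemma and a global sign flip turning $\Lambda(-1)$ into $\Lambda(1)=\sum_i\lambda_i$.
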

The product is $1$ when $\ell=0$, so the integrand consists only of $\lambda_i$.  Beginning with this formula, a short manipulation of linear Hodge integrals provides the Harer--Zagier formula.

\begin{cor}\label{cor:HZ}
	The Harer--Zagier formula holds true.
\end{cor}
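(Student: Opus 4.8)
\emph{Sketch of the argument for Corollary~\ref{cor:HZ}.} The plan is to evaluate the right-hand side of \eqref{mainform} directly. Expanding $\frac{\psi_{n+j}^{2}}{1+\psi_{n+j}}=\sum_{b\ge 2}(-1)^{b}\psi_{n+j}^{b}$ and recalling that $\lambda_{i}$ has degree $i$ while each of the $\ell$ factors has degree at least $2$, only the terms with $\ell\le 3g-3+n-i$ survive, so the sum over $\ell$ is finite and $\chi_{g,n}$ is well defined. For $g=0$ only $i=0$, $\lambda_{0}=1$, contributes; substituting the multinomial values $\int_{\overline{\mathcal{M}}_{0,N}}\psi_{1}^{b_{1}}\cdots\psi_{N}^{b_{N}}=\binom{N-3}{b_{1},\dots,b_{N}}$ and carrying out the resulting elementary resummation collapses the nested sums to
\[
\chi_{0,n}\;=\;(n-3)!\,[x^{n-3}]\Bigl(\frac{x}{e^{x}-1}\Bigr)^{n-2}\;=\;(-1)^{n-3}(n-3)!,
\]
the last equality being the classical evaluation $B^{(m+1)}_{m}=(-1)^{m}m!$ of the higher-order (Nörlund) Bernoulli numbers; this is the first case of Theorem~\ref{thm:HZ}.

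For $g\ge 1$ I would first push forward along the forgetful morphism $\pi_{\ell}\colon\overline{\mathcal{M}}_{g,n+\ell}\to\overline{\mathcal{M}}_{g,n}$: since the Hodge bundle is pulled back from $\overline{\mathcal{M}}_{g}$ we have $\lambda_{i}=\pi_{\ell}^{*}\lambda_{i}$, and the remaining integrand is a polynomial in the $\psi$-classes of the forgotten points, so the projection formula yields
\[
\chi_{g,n}\;=\;(-1)^{3g-3+n}\int_{\overline{\mathcal{M}}_{g,n}} c(\mathbb{E})\cdot\mathsf{K}_{g,n},\qquad
\mathsf{K}_{g,n}:=\sum_{\ell\ge 0}\frac{1}{\ell!}\,(\pi_{\ell})_{*}\Bigl(\prod_{j=1}^{\ell}\tfrac{\psi_{n+j}^{2}}{1+\psi_{n+j}}\Bigr),
\]
where $c(\mathbb{E})=\sum_{i}\lambda_{i}$ and $\mathsf{K}_{g,n}$ is, by the Kaufmann--Manin--Zagier rule for push-forwards of $\psi$-monomials of forgotten points, a universal polynomial in $\kappa$-classes; the $\ell$-sum resums to an exponential of $\kappa$-classes. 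Now Theorem~\ref{thm:main} identifies $\chi_{g,n}$ with the genuine orbifold Euler characteristic, which is multiplicative along the fibration $\mathcal{M}_{g,n+1}\to\mathcal{M}_{g,n}$ (whose fibre, a genus-$g$ curve with $n$ punctures, has Euler characteristic $2-2g-n$); hence $\chi_{g,n}=(-1)^{n-1}\tfrac{(2g-3+n)!}{(2g-2)!}\,\chi_{g,1}$. Since the Harer--Zagier expression obeys the same relation (with $B_{2}=\tfrac{1}{6}$ giving the $g=1$ normalisation $\tfrac{1}{12}$), the last two cases of Theorem~\ref{thm:HZ} reduce to the single identity $\chi_{g,1}=-\tfrac{B_{2g}}{2g}$, equivalently $\chi_{g}=\tfrac{B_{2g}}{2g(2g-2)}$ for $g\ge 2$.

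To establish that identity I would specialise the second displayed formula at small $n$ ($n=0$ for $g\ge 2$, so that one integrates directly over $\overline{\mathcal{M}}_{g}$; $n=1$ for $g=1$), and then insert Mumford's Grothendieck--Riemann--Roch formula $\mathrm{ch}_{2k}(\mathbb{E})=0$, $\mathrm{ch}_{2k-1}(\mathbb{E})=\tfrac{B_{2k}}{(2k)!}\bigl(\kappa_{2k-1}+\text{boundary}\bigr)$ to turn $c(\mathbb{E})$ into an exponential of $\kappa$-classes as well; the remaining top-degree $\kappa$-integral over $\overline{\mathcal{M}}_{g}$ is then evaluated by the standard calculus of tautological classes (for $g=1$ this recovers $\chi_{1,1}=-\tfrac{1}{12}$).

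The step I expect to be the main obstacle is exactly this last positive-genus computation: performing the $\kappa$-resummation, pairing it against Mumford's expression for $c(\mathbb{E})$ including all boundary contributions, and checking that the answer collapses onto the single Bernoulli number $B_{2g}$. Genus $0$ is easy precisely because there is no Hodge class present and the relevant $\psi$-intersection numbers are multinomial coefficients; in higher genus the content is the nontrivial evaluation of the pertinent linear Hodge integral, for which one leans on known results — Mumford's formula together with the evaluation of $\chi(\mathcal{M}_{g})$, i.e.\ of the relevant $\lambda$- and $\kappa$-integrals over $\overline{\mathcal{M}}_{g}$.
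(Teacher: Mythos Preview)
Your overall architecture — reduce via the recursion $\chi_{g,n+1}=-(2g-2+n)\chi_{g,n}$ to the base cases $\chi_{0,3}$, $\chi_{1,1}$, $\chi_{g,0}$ — matches the paper's. Two points deserve comment.

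\textbf{The recursion in $n$.} You obtain it by invoking multiplicativity of the orbifold Euler characteristic along the fibration $\mathcal{M}_{g,n+1}\to\mathcal{M}_{g,n}$, i.e.\ by appealing back to the topological meaning of $\chi_{g,n}$ (essentially the short-exact-sequence argument already in Harer--Zagier). The paper deliberately avoids this and re-derives the recursion purely from the intersection-theoretic expression: writing $\chi_{g,n}=\int_{\overline{\mathcal{M}}_{g,n}}\Lambda(-1)\exp(-\sum_{m}\kappa_m/m)$, pulling back under $\pi$ via $\pi^{*}\Lambda(-1)=\Lambda(-1)$ and $\pi^{*}\kappa_m=\kappa_m-\psi_{n+1}^{m}$, and then pushing forward $\psi_{n+1}$ to $\kappa_0=2g-2+n$. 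Your route is logically fine, but it forfeits the ``new proof'' aspect the paper is after; the paper's computation is short and worth knowing.

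\textbf{The base case $g\ge 2$.} This is where your sketch has a genuine gap. You propose to feed Mumford's GRR expression for $c(\mathbb{E})$ into the $\kappa$-exponential and hope the boundary terms organise themselves into a single Bernoulli number; you yourself flag this as the main obstacle and do not carry it out. Your closing sentence — that one ``leans on \dots\ the evaluation of $\chi(\mathcal{M}_g)$'' — is dangerously close to circular, since that evaluation \emph{is} the Harer--Zagier formula. The paper does not attempt any such GRR gymnastics: it simply quotes the closed formula of Dubrovin--Yang--Zagier,
\[
\sum_{\ell\ge 1}\frac{1}{\ell!}\sum_{\mu_1,\dots,\mu_\ell\ge 1}\int_{\overline{\mathcal{M}}_{g,\ell}}\Lambda(-1)\prod_{i=1}^{\ell}\psi_i^{\mu_i+1}=\frac{B_{2g}}{2g(2g-2)},
\]
which is exactly $\chi_{g,0}$ by Theorem~\ref{thm:main}. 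That citation is the missing ingredient in your argument; without it (or an equivalent independent evaluation of this linear Hodge integral), the $g\ge 2$ case is not proved.

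Your genus-zero treatment via Nörlund Bernoulli numbers is a pleasant alternative to the paper's one-line reduction to $\chi_{0,3}=1$ plus the recursion.
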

The techniques used by Harer and Zagier in \cite{HZ86} required $n>0$ and deduced the $n=0$ case via an exact sequence.  The formula \eqref{mainform} applies equally for all $n\geq 0$.

\subsection{A parallel with Masur--Veech volumes via $\Omega$-classes}

In fact, Theorem~\ref{thm:main} relies on the observation that a specific parametrisation of the $\Omega$-classes (see Section \ref{sec:Omega:classes}) is the Chern class of the logarithmic tangent bundle. Thus, the following result is a consequence of Gauss--Bonnet.

\begin{prop}\label{prop:chi:Omega}
	The Euler characteristic $\chi_{g,n}$ is given by the following $\Omega$-integral:
	\begin{equation}\label{eqn:chi:Omega}
		\chi_{g,n}
		=
		\int_{\overline{\mathcal{M}}_{g,n}} \Ch(1,-1;0).
	\end{equation}
\end{prop}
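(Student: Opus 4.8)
The plan is to derive \eqref{eqn:chi:Omega} from two inputs. The first is the \emph{logarithmic Gauss--Bonnet theorem}: if $X$ is a smooth proper Deligne--Mumford stack of dimension $d$ and $D \subset X$ is a simple normal crossings divisor, then
\[
\int_X c_d\bigl( T_X(-\log D) \bigr) = \chi(X \setminus D),
\]
the orbifold Euler characteristic of the open complement. I would apply this to $X = \overline{\mathcal{M}}_{g,n}$, with $d = 3g - 3 + n$ and $D = \partial\overline{\mathcal{M}}_{g,n}$ the boundary divisor of nodal curves, which is simple normal crossings by the Deligne--Mumford--Knudsen construction of the compactification; this gives
\[
\int_{\overline{\mathcal{M}}_{g,n}} c_{3g-3+n}\bigl( T_{\overline{\mathcal{M}}_{g,n}}(-\log\partial) \bigr) = \chi(\mathcal{M}_{g,n}) = \chi_{g,n}.
\]
For the stacky incarnation of Gauss--Bonnet I would either cite an existing orbifold version, or reduce to the manifold statement by passing to a suitable finite cover, along which both sides are multiplied by the degree.

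The second input is the identification of total Chern classes
\[
\Ch(1,-1;0) = c\bigl( T_{\overline{\mathcal{M}}_{g,n}}(-\log\partial) \bigr),
\]
which is exactly the observation announced in the introduction and established in Section~\ref{sec:Omega:classes}. The mechanism is the following. One views the universal curve $\pi\colon \overline{\mathcal{C}}_{g,n} \to \overline{\mathcal{M}}_{g,n}$ as a log smooth family, the log structure coming from the marked sections and the nodes; then its relative log canonical sheaf is the line bundle $\omega_\pi^{\mathrm{log}} = \omega_\pi\bigl( \textstyle\sum_i \sigma_i \bigr)$ and its relative log tangent sheaf is the line bundle $(\omega_\pi^{\mathrm{log}})^\vee$. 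The logarithmic Kodaira--Spencer map identifies $T_{\overline{\mathcal{M}}_{g,n}}(-\log\partial) \cong R^1\pi_*(\omega_\pi^{\mathrm{log}})^\vee$, while $R^0\pi_*(\omega_\pi^{\mathrm{log}})^\vee = 0$ since stable pointed curves have no infinitesimal log automorphisms; hence $[T_{\overline{\mathcal{M}}_{g,n}}(-\log\partial)] = -[R\pi_*(\omega_\pi^{\mathrm{log}})^\vee]$ in $K$-theory. On the other hand, at $r = 1$ there is a unique $r$-th root of $(\omega_\pi^{\mathrm{log}})^{\otimes(-1)}$, namely $(\omega_\pi^{\mathrm{log}})^\vee$ itself, and the root-forgetting map is the identity; so, directly from the definition of the $\Omega$-class, $\Ch(1,-1;0) = c\bigl( -R\pi_*(\omega_\pi^{\mathrm{log}})^\vee \bigr)$. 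Comparing with the previous sentence yields the displayed equality of classes.

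Combining the two inputs, $\int_{\overline{\mathcal{M}}_{g,n}} \Ch(1,-1;0)$ picks out the degree $3g-3+n$ part of $c\bigl( T_{\overline{\mathcal{M}}_{g,n}}(-\log\partial) \bigr)$, which by logarithmic Gauss--Bonnet equals $\chi_{g,n}$, proving \eqref{eqn:chi:Omega}. I expect the main obstacle to lie at the interface of the two inputs over the boundary: one must verify that the logarithmic Kodaira--Spencer isomorphism holds all along $\partial\overline{\mathcal{M}}_{g,n}$, correctly separating the node-smoothing directions from the equisingular ones, and that the normalisation of the $\Omega$-class carries no compensating power of $r$ or sign that persists after specialising to $r = 1$. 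Both are handled by the log-smooth deformation theory of the universal curve together with the $r = 1$ case of Chiodo's formula, but this is the part of the argument that needs to be spelled out carefully.
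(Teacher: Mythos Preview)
Your proposal is correct and follows essentially the same route as the paper: logarithmic Gauss--Bonnet plus the identification of $\Ch(1,-1;0)$ with the total Chern class of the log tangent bundle via $R^1\pi_*(\omega_\pi^{\log})^\vee$. The only cosmetic differences are that the paper states Gauss--Bonnet in terms of the log \emph{cotangent} bundle with a compensating sign $(-1)^{3g-3+n}$, and obtains the bundle identification by first noting that the fibre of the log cotangent bundle is $H^0(C,\omega_C^{\otimes 2}(\sum_i p_i))$ and then applying Serre duality, rather than invoking log Kodaira--Spencer directly; these are the same argument in dual guise. One small correction: the identification is carried out in Section~\ref{sec:proofs} (inside the proof of Theorem~\ref{thm:main}), not in Section~\ref{sec:Omega:classes}.
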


An explicit formula for the $\Omega$-classes in terms of $\psi$-classes, $\kappa$-classes and boundary divisors was proved by Chiodo \cite{Chi08}; expanding the $\Omega$-class in this form provides the statement of Theorem~\ref{thm:main}. 

\medskip

We would like here to make a brief parallel with a slightly different story. Masur--Veech volumes $\pi^{-(6g - 6 + 2n)} \MV_{g,n} \in \QQ$ of the principal stratum of the moduli space of quadratic differentials have been shown to have a cohomological representation, given by the Segre class \textemdash~as opposed to the Chern class \textemdash~of the logarithmic tangent bundle (to make the comparison cleaner, we ignore the normalisation constant $2^{2g+1} (4g-4+n)!/(6g - 7 + 2n)!$ due to the labelling of simple poles and zeros of the quadratic differentials, and the normalisation of the Masur--Veech measure).

\begin{thm}[{\cite{CMS+19}}] \label{thm:CMS}
	The Masur--Veech volume $\MV_{g,n}$ is given by the following $\Omega$-integral:
	\begin{equation}\label{eqn:MVOmega}
		\frac{\MV_{g,n}}{\pi^{6g - 6 + 2n}}
		=
		(-1)^{3g-3+n}
		\int_{\overline{\mathcal{M}}_{g,n}} \Ch(1,2;0)
		=
		\int_{\overline{\mathcal{M}}_{g,n}} \bigl( \Ch(1,-1;0) \bigr)^{-1}.
	\end{equation}
\end{thm}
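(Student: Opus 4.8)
The plan is to reduce the first equality to \cite{CMS+19} and to obtain the second from a formal property of $\Omega$-classes. For the first equality, recall that \cite{CMS+19} computes $\MV_{g,n}$ by counting the integer points of the principal stratum of the moduli space of quadratic differentials; the count reorganises as a sum over stable graphs in which vertices contribute $\psi$-monomial integrals over the smaller $\overline{\mathcal{M}}_{g_v,n_v}$ and edges contribute explicit polynomials built out of even zeta values. By Chiodo's formula \cite{Chi08} this graph sum is exactly $(-1)^{3g-3+n}\int_{\overline{\mathcal{M}}_{g,n}}\Ch(1,2;0)$, the parameters encoding that a quadratic differential is a square ($s=2$) carrying the trivial spin structure ($r=1$). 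This step is completely parallel to the passage from Proposition~\ref{prop:chi:Omega} to Theorem~\ref{thm:main} via Chiodo's formula, so I would simply invoke \cite{CMS+19}.

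For the second equality, I would observe that $\int_{\overline{\mathcal{M}}_{g,n}}$ sees only cohomological degree $3g-3+n$, so it is enough to establish the class-level identity
\begin{equation*}
	\bigl[\bigl(\Ch(1,-1;0)\bigr)^{-1}\bigr]_d = (-1)^d\,\bigl[\Ch(1,2;0)\bigr]_d
	\qquad\text{in each degree } d,
\end{equation*}
with the overall sign $(-1)^{3g-3+n}$ then coming out automatically on the top piece. I would prove this --- and record it among the general properties of $\Omega$-classes --- using relative Serre duality on the universal curve $\pi\colon\mathcal{C}\to\overline{\mathcal{M}}_{g,n}$: if $\mathcal{L}$ is the line bundle attached to the parameters $(1,-1;0)$, then $R\pi_*(\omega_\pi\otimes\mathcal{L}^\vee)\cong-(R\pi_*\mathcal{L})^\vee$ in $K$-theory, so $c\bigl(-R\pi_*(\omega_\pi\otimes\mathcal{L}^\vee)\bigr)$ is the dual total Chern class of $R\pi_*\mathcal{L}$, whose degree-$d$ part is $(-1)^d$ times that of $c(-R\pi_*\mathcal{L})^{-1}=\bigl(\Ch(1,-1;0)\bigr)^{-1}$. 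Since here $\mathcal{L}=\omega_\pi^\vee(-\sum_i\sigma_i)$, the bundle $\omega_\pi\otimes\mathcal{L}^\vee=\omega_\pi^{\otimes 2}(\sum_i\sigma_i)$ is (a twist of) the universal bundle with parameters $(1,2;\,\cdot\,)$, which identifies the right-hand side. Equivalently, once one uses the observation underlying Proposition~\ref{prop:chi:Omega} that $\Ch(1,-1;0)=c(T^{\mathrm{log}}\overline{\mathcal{M}}_{g,n})$, one gets $\bigl(\Ch(1,-1;0)\bigr)^{-1}=s(T^{\mathrm{log}}\overline{\mathcal{M}}_{g,n})$, which is literally the ``Segre class of the logarithmic tangent bundle'' representation of $\MV_{g,n}$ of \cite{CMS+19}, the sign $(-1)^{3g-3+n}$ being the discrepancy between the top Segre classes of the log tangent and log cotangent bundles.

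The main obstacle is not in the cohomology but in the bookkeeping: one must track three independent normalisations --- that of $\MV_{g,n}$ in \cite{CMS+19} (the labelling of simple zeros and poles, and the Masur--Veech measure), the conventions for $\Ch(r,s;\mathbf{a})$ and for Chiodo's formula \cite{Chi08}, and the $\omega_\pi$ versus $\omega_\pi(\sum_i\sigma_i)$ twisting at the marked points --- carefully enough that the Serre-dual partner of $(1,-1;0)$ is genuinely $\Ch(1,2;0)$ and not a version carrying an extra twist by the marked points, and that the two sign conventions (the explicit $(-1)^{3g-3+n}$ and the one implicit in inverting a total cohomology class) agree. Once these are pinned down, the argument is formal.
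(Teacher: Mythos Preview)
The paper does not prove this theorem; both equalities are quoted from \cite{CMS+19}, with the second one later isolated as the $r=1$ Segre--Chern identity in Remark~\ref{rem:Chern:Segre} (again attributed to \cite{CMS+19}). So there is no in-paper argument to compare against.

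Your sketch is sound. For the first equality, invoking \cite{CMS+19} is exactly what the paper does. For the second, your Serre-duality route works here, but you should be aware of the footnote to Remark~\ref{rem:Chern:Segre}: the relation one would naively expect from relative duality,
\[
\Ch^{[-x]}(r,r-s;r-a_1,\dots,r-a_n) \;=\; \bigl(\Ch^{[x]}(r,s;a_1,\dots,a_n)\bigr)^{-1},
\]
is \emph{false} for $r>1$ (an explicit counterexample is given there). It succeeds in the case at hand precisely because $r=1$, and this is also what dissolves the twist you flag: $\omega_\pi \otimes \mathcal{L}^\vee = \omega_\pi^{\otimes 2}(\sum_i \sigma_i) = \omega_{\log}^{\otimes 2}(-\sum_i \sigma_i)$ carries parameters $(1,2;1,\dots,1)$ rather than $(1,2;0,\dots,0)$, but property~\eqref{prop:0:r:sym} with $r=1$ gives $\Ch(1,2;1,\dots,1) = \Ch(1,2;0,\dots,0)$. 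The sign $(-1)^{3g-3+n}$ then drops out exactly as you say, as the top-degree discrepancy between $\Ch^{[x]}$ and $\Ch^{[-x]}$. So your bookkeeping worry is legitimate but harmlessly resolved in this particular parametrisation; it would \emph{not} be harmless for $r>1$.
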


The integrand class is a Segre class instead of a Chern class, which produces a shift in the parameters of the $\Omega$-class (see \ref{rem:Chern:Segre} for a more general relation). In a similar fashion, expanding the Chern characters of the $\Omega$-class, one can express the Masur--Veech volumes $\MV_{g,n}$ as an explicit finite linear combination of Hodge integrals. In the same way Equation~\eqref{eqn:MVOmega} is a ``dual'' statement to Equation~\eqref{eqn:chi:Omega}, the following statement corresponds to our result for the Euler characteristic in Theorem~\ref{thm:main}.

\begin{thm}[{\cite{CMS+19}}]
	The Masur--Veech volume $\MV_{g,n}$ is given by the following  Hodge integrals:
	\begin{equation}
	    \frac{\MV_{g,n}}{\pi^{6g - 6 + 2n}}
	    =
		\sum_{\ell \ge 0} \frac{1}{\ell!} \sum_{i=0}^g
			\int_{\overline{\mathcal{M}}_{g,n + \ell}} \lambda_i \psi_{n+1}^2 \cdots \psi_{n+\ell}^2.
	\end{equation}
\end{thm}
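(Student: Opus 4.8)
The plan is to re-derive this formula in complete parallel with Theorem~\ref{thm:main}, replacing its input Proposition~\ref{prop:chi:Omega} by the $\Omega$-integral of Theorem~\ref{thm:CMS}. I would start from the Segre-class form
\[
	\frac{\MV_{g,n}}{\pi^{6g-6+2n}} \;=\; \int_{\overline{\mathcal{M}}_{g,n}} \bigl(\Ch(1,-1;0)\bigr)^{-1},
\]
so that the whole argument comes down to inverting the class $\Ch(1,-1;0)$ and then expanding it through Chiodo's formula.

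First I would apply Chiodo's formula for the parameters $(1,-1;0)$, exactly as in the passage from Proposition~\ref{prop:chi:Omega} to Theorem~\ref{thm:main}. Since $r=1$ the node twists are trivial, so the boundary coefficients appearing in Chiodo's formula are the $s$-independent numbers $B_{m+1}/(m+1)!$; together with the $\psi$-terms at the original marked points these assemble, by Mumford's formula, precisely into the total Chern class of the dual Hodge bundle, $c(\mathbb{E}^{\vee}) = \sum_{i=0}^{g}(-1)^i\lambda_i$. The dependence on $s$ enters only through the shift $B_{m+1}(-1) = B_{m+1} + (-1)^{m+1}(m+1)$ in the coefficient of $\kappa_m$, and exponentiating the corresponding correction yields
\[
	\Ch(1,-1;0) \;=\; c(\mathbb{E}^{\vee})\,\exp\!\Bigl(-\sum_{m\ge1}\tfrac{1}{m}\kappa_m\Bigr).
\]
As $c(\mathbb{E}^{\vee})^{-1} = c(\mathbb{E}) = \sum_{i=0}^{g}\lambda_i$ by Mumford's relation, this inverts at once to $\bigl(\Ch(1,-1;0)\bigr)^{-1} = c(\mathbb{E})\exp\!\bigl(\sum_{m\ge1}\tfrac1m\kappa_m\bigr)$. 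This is the precise sense in which the Masur--Veech computation is ``dual'' to the Euler-characteristic one: passing to the inverse class flips the sign of the $\kappa$-series, which is exactly what removes the alternating signs that, in the proof of Theorem~\ref{thm:main}, turned into the denominators $1/(1+\psi_{n+j})$.

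Finally I would trade the $\kappa$-classes for insertions at additional marked points. Writing $\pi_\ell\colon\overline{\mathcal{M}}_{g,n+\ell}\to\overline{\mathcal{M}}_{g,n}$ for the map forgetting the last $\ell$ points, the Kaufmann--Manin--Zagier pushforward identity gives $(\pi_\ell)_*\bigl(\psi_{n+1}^2\cdots\psi_{n+\ell}^2\bigr) = \sum_{\sigma\in S_\ell}\prod_{c}\kappa_{|c|}$, the sum over permutations and the product over their cycles; the exponential formula for the cycle index of $S_\ell$ then yields $\sum_{\ell\ge0}\tfrac1{\ell!}(\pi_\ell)_*\bigl(\psi_{n+1}^2\cdots\psi_{n+\ell}^2\bigr) = \exp\!\bigl(\sum_{m\ge1}\tfrac1m\kappa_m\bigr)$. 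Since the Hodge bundle pulls back along $\pi_\ell$, so that $\lambda_i = \pi_\ell^{*}\lambda_i$, the projection formula now gives
\[
	\frac{\MV_{g,n}}{\pi^{6g-6+2n}}
	= \int_{\overline{\mathcal{M}}_{g,n}} c(\mathbb{E})\,\exp\!\Bigl(\sum_{m\ge1}\tfrac1m\kappa_m\Bigr)
	= \sum_{\ell\ge0}\frac1{\ell!}\sum_{i=0}^{g}\int_{\overline{\mathcal{M}}_{g,n+\ell}} \lambda_i\,\psi_{n+1}^2\cdots\psi_{n+\ell}^2,
\]
which is the asserted identity (the sum is finite, as the dimension count forces $\ell = 3g-3+n-i$).

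The argument is short mainly because it recycles the cohomological computation already needed for Theorem~\ref{thm:main}; the one step deserving care is the Chiodo identity $\Ch(1,-1;0) = c(\mathbb{E}^{\vee})\exp(-\sum_m\kappa_m/m)$, where one must check that the boundary terms of Chiodo's formula at these parameters recombine into exactly Mumford's formula for $c(\mathbb{E}^{\vee})$ with nothing left over, and that the $\kappa$-classes are used in the normalisation $\kappa_m = (\pi_1)_*\psi_{n+1}^{m+1}$ compatible with the pushforward identity above. Both points are settled by the proof of Theorem~\ref{thm:main}. Alternatively, one can expand $\Ch(1,2;0)$ directly --- using $B_{m+1}(2) = B_{m+1} + (m+1)$, which gives $\Ch(1,2;0) = c(\mathbb{E}^{\vee})\exp(-\sum_m(-1)^{m-1}\kappa_m/m)$ --- and absorb the sign $(-1)^{3g-3+n}$ of Theorem~\ref{thm:CMS} through the duality $\int_{\overline{\mathcal{M}}_{g,n}}\alpha = (-1)^{3g-3+n}\int_{\overline{\mathcal{M}}_{g,n}}\alpha^{\vee}$.
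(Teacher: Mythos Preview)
Your proposal is correct and follows exactly the route the paper indicates. The paper does not prove this cited theorem in detail but explains (in the paragraph preceding it) that it follows from Theorem~\ref{thm:CMS} by expanding Chiodo's formula, in complete parallel with the derivation of Theorem~\ref{thm:main} from Proposition~\ref{prop:chi:Omega}; your argument carries this out verbatim, including the identification $\Ch(1,-1;0)=\Lambda(-1)\exp(-\sum_m\kappa_m/m)$ already established in the proof of Theorem~\ref{thm:main} and the Mumford relation $c(\mathbb{E})c(\mathbb{E}^\vee)=1$ to invert it. The only cosmetic difference is that you invoke the Kaufmann--Manin--Zagier pushforward identity and the cycle-index exponential formula directly, whereas the paper's Lemma~\ref{lem:push:kappa} packages the same computation via the change of variables \eqref{eqn:change:u:v}: here $u_m=\tfrac{1}{m}$ gives $\exp(-\sum_m x^m/m)=1-x$, hence $v_1=1$ and $v_k=0$ for $k\ge 2$, which selects precisely the monomials $\psi_{n+1}^2\cdots\psi_{n+\ell}^2$.
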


In fact this result had tremendous consequences on the study of Masur--Veech volumes of quadratic differentials. First, the statement settled two conjectures on the large $n$ behaviour of the Masur--Veech volumes and their relative Siegel--Veech constants elaborated in \cite{ABCDGLW19}. Then, in two different works, Kazarian and Yang--Zagier produced fast recursive methods to compute these Masur--Veech volumes by exploiting KP-type (resp. ILW-type) integrability properties of such linear Hodge integrals. Moreover, Aggarwal \cite{Agg21} provided their large $g$ limit behaviour, originally conjectured by Aggarwal--Delecroix--Goujard--Zograf--Zorich.

\medskip

To conclude the parallelism, we point out that both enumerative problems $\MV_{g,n}$ and $\chi_{g,n}$ are generated by topological recursion in the sense of Eynard and Orantin \cite{ABCDGLW19,DN11}.

\medskip

Being $\Omega$-classes by now employed in around twenty enumerative problems, mostly recently found and at times surprisingly different from each other, we dedicate some work to produce an extensive list of their general properties: extending existing ones, finding new ones, and writing down some only known to the experts.

\subsection*{Acknowledgements}

This work is partly a result of the ERC-SyG project, Recursive and Exact New Quantum Theory (ReNewQuantum) which received funding from the European Research Council (ERC) under the European Union's Horizon 2020 research and innovation programme under grant agreement No 810573. A.~G.~has been supported by the Max-Planck-Gesellschaft and the Institut de Physique Th\`{e}orique Paris (IPhT), CEA, Universit\'{e} de Saclay. D.~L.~has been supported by the Section de Math\'{e}matique de l'Universit\'{e} de Gen\`{e}ve, by the Institut de Physique Th\`{e}orique Paris (IPhT), CEA, Universit\'{e} de Saclay, by the Institut des Hautes \'Etudes Scientifiques (IHES), Universit\'{e} de Saclay, and by the INdAM group GNSAGA. P.~N.~has been supported under the Australian Research Council Discovery Projects funding scheme project number DP180103891. Both A.~G. and D.~L. have been supported by the University of Melbourne who hosted the research visit which brought to this collaboration.

\smallskip

The authors thank G.~Borot, R.~Cavalieri and M.~Möller for useful discussions.

\subsection*{Outline of the paper}

In Section \ref{sec:Omega:classes} we provide the necessary background on $\Omega$-classes. In Section \ref{sec:proofs} we provide the proofs to the statements presented in the introduction. In Section \ref{sec:Omega:properties} we establish a list of properties of the $\Omega$-classes and prove them, as well as applying some of these properties to prove vanishing of integrals of $\Omega$-classes.

\section{Background on \texorpdfstring{$\Omega$}{Omega}-classes}
\label{sec:Omega:classes}

In \cite{Mum83}, Mumford derived a formula for the Chern character of the Hodge bundle on the moduli space of curves $\overline{\mathcal{M}}_{g,n}$ in terms of tautological classes and Bernoulli numbers. Among various applications, such class appears in the celebrated ELSV formula \cite{ELSV01}, named after its four authors Ekedahl, Lando, Shapiro, Vainshtein, that is an equality between simple Hurwitz number and an integral over the moduli space of stable curves.

\medskip

A generalisation of Mumford's formula was found by Chiodo in \cite{Chi08}. The moduli space $\overline{\mathcal{M}}_{g,n}$ is substituted by the proper moduli stack $\overline{\mathcal{M}}_{g;a}^{r,s}$ of $r$-th roots of the line bundle
\begin{equation}
	\omega_{\log}^{\otimes s}\biggl(-\sum_{i=1}^n a_i p_i \biggr),
\end{equation}
where $\omega_{\log} = \omega(\sum_i p_i)$ is the log-canonical bundle, $r$ and $s$ are integers with $r$ positive, and $a_1, \ldots, a_n$ are integers satisfying the modular constraint
\begin{equation}
	a_1 + a_2 + \cdots + a_n \equiv (2g-2+n)s \pmod{r}.
\end{equation}
This condition guarantees the existence of a line bundle whose $r$-th tensor power is isomorphic to $\omega_{\log}^{\otimes s}(-\sum_i a_i p_i)$. Let $\pi \colon \overline{\mathcal{C}}_{g;a}^{r,s} \to \overline{\mathcal{M}}_{g;a}^{r,s}$ be the universal curve, and $\mathcal{L} \to \overline{\mathcal C}_{g;a}^{r,s}$ the universal $r$-th root. The {\em type} of the marked point $p_i$ is $a_i \pmod{r}$. The lower strata of $\overline{\mathcal{M}}_{g;a}^{r,s}$ consist of moduli spaces $\overline{\mathcal{M}}_{g';a'}^{r,s}$ and in particular a node of a stable curve has type $a$ and $r-a$, again $\pmod{r}$, at the two branches.  In complete analogy with the case of moduli spaces of stable curves, one can define $\psi$-classes and $\kappa$-classes. There is moreover a natural forgetful morphism
\begin{equation}
	\epsilon \colon
	\overline{\mathcal{M}}^{r,s}_{g;a}
	\longrightarrow
	\overline{\mathcal{M}}_{g,n}
\end{equation}
which forgets the line bundle, otherwise known as the spin structure. It can be turned into an unramified covering (in the orbifold sense) of degree $r^{2g - 1}$ by slightly modifying the structure of $\overline{\mathcal{M}}_{g,n}$, introducing an extra $\ZZ/r\ZZ$ stabiliser for each node of each stable curve (see \cite{JPPZ17}).

\medskip

Let $B_m(x)$ denote the $m$-th Bernoulli polynomial, that is the polynomial defined by the generating series
\begin{equation}
	\frac{te^{tx}}{e^t - 1} = \sum_{m = 0}^{\infty} B_{m}(x)\frac{t^m}{m!}.
\end{equation}
The evaluations $B_m(0) = (-1)^m B_m(1) = B_m$ recover the usual Bernoulli numbers. Chiodo's formula provides an explicit formula for the Chern characters of the derived pushforward of the universal $r$-th root $\ch_m(r,s;a) = \ch_m(R^{\bullet} \pi_{\ast}{\mathcal L})$.

\begin{thm}[\cite{Chi08}]
	The Chern characters $\ch_m(r,s;a)$ of the derived pushforward of the universal $r$-th root have the following explicit expression in terms of $\psi$-classes, $\kappa$-classes, and boundary divisors:
	\begin{equation} \label{eqn:Chiodo:formula}
		\ch_m(r,s;a)
		=
		\frac{B_{m+1}(\tfrac{s}{r})}{(m+1)!} \kappa_m
		-
		\sum_{i=1}^n \frac{B_{m+1}(\tfrac{a_i}{r})}{(m+1)!} \psi_i^m
		+
		\frac{r}{2} \sum_{a=0}^{r-1} \frac{B_{m+1}(\tfrac{a}{r})}{(m+1)!} \, j_{a,\ast} \frac{(\psi')^m - (-\psi'')^m}{\psi' + \psi''}. 
	\end{equation}
	Here $j_a$ is the boundary morphism that represents the boundary divisor such that the two branches of the corresponding node are of type $a$ and $r-a$, and $\psi',\psi''$ are the $\psi$-classes at the two branches of the node.
\end{thm}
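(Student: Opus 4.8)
The plan is to compute $\ch(R^\bullet\pi_\ast\mathcal{L})$ by Grothendieck--Riemann--Roch applied to the universal curve $\pi\colon\overline{\mathcal{C}}_{g;a}^{r,s}\to\overline{\mathcal{M}}_{g;a}^{r,s}$, following the template of Mumford's computation of the Chern character of the Hodge bundle (the case $r=1$, $s=0$, $a=0$) and generalising it by the twist $\mathcal{L}$. GRR reads
\begin{equation}
	\ch\bigl(R^\bullet\pi_\ast\mathcal{L}\bigr)
	=
	\pi_\ast\Bigl(\ch(\mathcal{L})\cdot\mathrm{Td}^{\vee}(\omega_\pi)\Bigr),
\end{equation}
with $\omega_\pi$ the relative dualising sheaf; extracting the degree-$(m+1)$ component of the integrand on $\overline{\mathcal{C}}_{g;a}^{r,s}$ and pushing it forward produces $\ch_m(r,s;a)$. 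Where the fibre is smooth one has $c_1(\omega_\pi)=\psi$ and $\mathrm{Td}^{\vee}(\omega_\pi)=\psi/(e^\psi-1)=\sum_{m\ge0}B_m\,\psi^m/m!$, while over the boundary one uses the variant of the Todd class appropriate to a family acquiring a node, whose singular-locus contribution is exactly what creates the denominator $\psi'+\psi''$ --- this is already the mechanism behind the last term of Mumford's formula.

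The first genuine task, and the step I expect to be the main obstacle, is to pin down $c_1(\mathcal{L})$. Since $\mathcal{L}^{\otimes r}\cong\omega_{\log}^{\otimes s}(-\sum_i a_i p_i)$ \emph{as line bundles on the universal curve over the root stack}, one has $r\,c_1(\mathcal{L})=s\,c_1(\omega_{\log})-\sum_i a_i[\sigma_i]$ \emph{up to a correction supported over the boundary}: the bundle $\mathcal{L}$ is a genuine $r$-th root, not a pullback, so its restriction to a node has prescribed orders on the two branches, and $c_1(\mathcal{L})$ acquires a fractional term along each boundary divisor recording the branch type $a/r$. Dividing by $r$ gives, schematically, $c_1(\mathcal{L})=\tfrac{s}{r}\psi-\sum_i\tfrac{a_i}{r}[\sigma_i]-(\text{fractional boundary terms})$. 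Establishing this clean local description near the nodes rests on the structure theory of the moduli of $r$-th roots (twisted stable curves) and on the fact that the total space $\overline{\mathcal{C}}_{g;a}^{r,s}$ carries an $A_{r-1}$-singularity over each boundary node of the root stack; with it in place, what remains is generating-function bookkeeping.

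Given $c_1(\mathcal{L})$, I would write $\ch(\mathcal{L})=e^{c_1(\mathcal{L})}$ and expand $e^{c_1(\mathcal{L})}\,\mathrm{Td}^{\vee}(\omega_\pi)$ one contribution at a time, the computation localising at the sections and at the boundary since $[\sigma_i]$, the boundary divisors, and one another are generically disjoint. On the bulk, away from the sections, $c_1(\mathcal{L})$ contributes the factor $e^{(s/r)\psi}$ and
\begin{equation}
	e^{(s/r)\psi}\cdot\frac{\psi}{e^\psi-1}
	=
	\sum_{m\ge0}B_m\!\left(\tfrac{s}{r}\right)\frac{\psi^m}{m!}
\end{equation}
by the defining generating series of the Bernoulli polynomials, so pushing the degree-$(m+1)$ term forward along $\pi$ yields $\tfrac{B_{m+1}(s/r)}{(m+1)!}\kappa_m$. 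Near the $i$-th section, using $[\sigma_i]^2=-\psi_i[\sigma_i]$ and that $\pi_\ast([\sigma_i]\cdot(-))$ is restriction to that section, the part $-\tfrac{a_i}{r}[\sigma_i]$ of $c_1(\mathcal{L})$ assembles in the same way into $-\tfrac{B_{m+1}(a_i/r)}{(m+1)!}\psi_i^m$. Finally, along each boundary divisor whose node has type $a$, the fractional twist $a/r$ on the two branches combines with the nodal Todd contribution to give $j_{a,\ast}\tfrac{(\psi')^m-(-\psi'')^m}{\psi'+\psi''}$; summing over $a=0,\dots,r-1$ and incorporating the factor $r$ from the $A_{r-1}$-singularity of the universal curve together with the factor $\tfrac12$ for the two branches yields $\tfrac{r}{2}\sum_{a}\tfrac{B_{m+1}(a/r)}{(m+1)!}j_{a,\ast}\tfrac{(\psi')^m-(-\psi'')^m}{\psi'+\psi''}$. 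Adding the three contributions gives \eqref{eqn:Chiodo:formula}.
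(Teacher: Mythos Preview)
The paper does not prove this theorem: it is stated in Section~\ref{sec:Omega:classes} as background and attributed to Chiodo \cite{Chi08} with no argument given. So there is no ``paper's own proof'' to compare against.

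That said, your sketch follows the route of the original reference: Chiodo's proof is precisely a Grothendieck--Riemann--Roch computation on the universal curve over the moduli of $r$-th roots, generalising Mumford's computation for the Hodge bundle. The three-term structure you outline --- a $\kappa$-contribution from the bulk via $e^{(s/r)\psi}\cdot\psi/(e^\psi-1)=\sum B_m(s/r)\psi^m/m!$, a $\psi_i$-contribution from each section via $[\sigma_i]^2=-\psi_i[\sigma_i]$, and a boundary contribution from the nodal Todd correction --- is exactly the shape of the argument. The genuinely delicate point, which you correctly flag as the main obstacle, is the local analysis near the nodes: one needs to work on Chiodo's smooth compactification by twisted curves, where the universal curve carries an $A_{r-1}$-type orbifold structure over each node and the root $\mathcal{L}$ has prescribed monodromy $a/r$ on the branches. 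Making the ``fractional boundary term'' in $c_1(\mathcal{L})$ precise, and then checking that the nodal Todd class combined with this twist produces $\tfrac{r}{2}\,B_{m+1}(a/r)/(m+1)!\,j_{a,\ast}\bigl((\psi')^m-(-\psi'')^m\bigr)/(\psi'+\psi'')$, is the heart of \cite{Chi08}; your sketch gestures at it but does not carry it out. One small slip to watch: you write $c_1(\mathcal{L})=\tfrac{s}{r}\psi-\sum\tfrac{a_i}{r}[\sigma_i]+\cdots$ with $\psi$ ambiguous --- you need to be explicit about whether this is $c_1(\omega_\pi)$ or $c_1(\omega_{\log})$, since the difference $\sum[\sigma_i]$ interacts with the section terms.
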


We can then consider the pushforward to the moduli space of stable curves of the family of Chern classes
\begin{equation}\label{eqn:Omega}
	\Ch_{g,n}^{[x]}(r,s;a)
	=
	\epsilon_{\ast}
	\exp{\Biggl(
		\sum_{m=1}^\infty (-x)^m (m-1)! \, \ch_m(r,s;a)
	\Biggr)}
	\in
	H^{\textup{even}}(\overline{\mathcal{M}}_{g,n}).
\end{equation}
We will omit the variable $x$ when $x = 1$ and the indices $(g,n)$ whenever it is clear from the context. Notice that we recover Mumford's formula for the Hodge class when $r = s = 1$ and $a = (1,\dots,1)$. For $r = 1$, general $s$ and $a = (s,\dots,s)$, we get the generalised Hodge classes considered by Bini in \cite{Bin03}. For any $r \in \ZZ_+$ and $s \in \ZZ$, we refer to these classes as $\Omega$-classes, or $\Omega$-CohFT when referring to them as a collection or when exploiting some of their features as a Cohomological Field Theory (CohFT), based on the following statement.

\begin{thm}[{\cite{LPSZ17}, $\Omega$-classes form a CohFT}]
	Let $r$ be a positive integer and let $V = \langle v_1, \dots, v_r \rangle_{\mathbb{C}}$ be a vector space. For any $s \in \ZZ$, the collection of maps 
	\begin{equation}
		\Omega_{g,n}(r,s; \bullet): V^{\otimes n} \longrightarrow H^{\textup{even}}(\overline{\mathcal{M}}_{g,n})
	\end{equation}
	associating to a vector $(v_{a_1}, \dots, v_{a_n})$ the cohomology class $\Omega_{g,n}(r,s; a_1, \dots, a_n)$, and extended by multilinearity, forms a cohomological field theory with associated $V$-metric $\eta$ defined by 
	\begin{equation}
		\eta(v_a, v_b) = \frac{1}{r} \delta_{a+b \equiv 0 \pmod{r}}		
	\end{equation}
	and whose unit is flat whenever $s$ belongs to the range $[0,r]$.
\end{thm}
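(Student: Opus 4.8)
The plan is to check the cohomological field theory axioms for the collection $\{\Omega_{g,n}(r,s;\bullet)\}$ by lifting each axiom along the forgetful morphism $\epsilon\colon\overline{\mathcal{M}}_{g;a}^{r,s}\to\overline{\mathcal{M}}_{g,n}$ to a statement about the geometry of the universal $r$-th root $\mathcal{L}$ and its derived pushforward $R^\bullet\pi_\ast\mathcal{L}$, and then to push forward along $\epsilon$ while tracking orbifold degrees. Set $\widetilde\Omega_{g,n}(r,s;a) := \exp\bigl(\sum_{m\geq 1}(-1)^m(m-1)!\,\ch_m(R^\bullet\pi_\ast\mathcal{L})\bigr)$ for the integrand of \eqref{eqn:Omega}, so that $\Omega_{g,n}(r,s;a)=\epsilon_\ast\widetilde\Omega_{g,n}(r,s;a)$; this is the total Chern class of $-R^\bullet\pi_\ast\mathcal{L}$, hence the exponential of an additive invariant. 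That each $\Omega_{g,n}(r,s;a)$ lies in even cohomology, that the assignment is $S_n$-equivariant, and that it vanishes unless $a_1+\cdots+a_n\equiv(2g-2+n)s\pmod r$ are immediate from this description and the symmetry of Chiodo's formula \eqref{eqn:Chiodo:formula}; the substance is in the gluing and unit axioms.

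For the gluing axioms I would analyse the boundary of $\overline{\mathcal{M}}_{g;a}^{r,s}$ over a boundary divisor of $\overline{\mathcal{M}}_{g,n}$, compatibly with $\epsilon$. For a separating node the modular constraint on one component determines a unique type $b$ for the node, so the preimage is a single divisor, the image of a gluing morphism $\widetilde q$ from $\overline{\mathcal{M}}_{g_1;(a_I,b)}^{r,s}\times\overline{\mathcal{M}}_{g_2;(a_J,r-b)}^{r,s}$; for a non-separating node all $r$ types occur, and the preimage is the union of the gluing images of $\overline{\mathcal{M}}_{g-1;(a,b,r-b)}^{r,s}$, $b=0,\dots,r-1$. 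Restricting $\mathcal{L}$ along such a morphism and normalising at the node, the associated exact sequence for the derived pushforward shows that the positive-degree Chern characters of $R^\bullet\pi_\ast\mathcal{L}$ are additive across the node — equivalently, restricting Chiodo's formula \eqref{eqn:Chiodo:formula} to the boundary, the $j_{a,\ast}$-term supplies exactly the self-intersection corrections that make $\ch_m$ additive — so $\widetilde\Omega_{g,n}(r,s;a)$ is multiplicative under $\widetilde q$. Applying $\epsilon_\ast$, the gluing of two $r$-th roots along a node carries a $\ZZ/r\ZZ$-torsor of data (the extra node stabiliser that turns $\epsilon$ into a degree-$r^{2g-1}$ covering), which produces the factor $r$; in the non-separating case this is combined with a sum over the $r$ node types. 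Altogether $q^\ast\Omega_{g,n}(r,s;a)=\sum_{\mu,\nu}\eta^{\mu\nu}\,\Omega_{g_1,n_1+1}(r,s;a_I,\mu)\boxtimes\Omega_{g_2,n_2+1}(r,s;a_J,\nu)$ with $\eta^{\mu\nu}=r\,\delta_{\mu+\nu\equiv 0}$, i.e.\ the gluing axioms with metric $\eta(v_a,v_b)=\tfrac{1}{r}\delta_{a+b\equiv 0\bmod r}$.

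For the unit, the modular constraint forces $\mathbf{1}=v_s$, the basis vector whose index is $s$ reduced modulo $r$ into $\{1,\dots,r\}$, since adding a marked point changes $(2g-2+n)s$ by $s$. The three-point axiom is an automorphism count on $\overline{\mathcal{M}}_{0,3}=\mathrm{pt}$: when $a_1+a_2+a_3\equiv s$ the space $\overline{\mathcal{M}}_{0;(a_1,a_2,a_3)}^{r,s}$ is a single point with automorphism group $\mu_r$, and empty otherwise, so $\Omega_{0,3}(v_{a_1},v_{a_2},v_{a_3})=\epsilon_\ast(1)=\tfrac{1}{r}\delta_{a_1+a_2+a_3\equiv s}$, which at $a_3=s$ is exactly $\eta(v_{a_1},v_{a_2})$, for every $s$. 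The flat-unit axiom is the delicate point. Forgetting the type-$s$ marked point defines $\mathsf{p}\colon\overline{\mathcal{M}}_{g;(a,s)}^{r,s}\to\overline{\mathcal{M}}_{g;a}^{r,s}$ lifting $\pi$, and the twists cancel on the universal curve, $\omega_{\log}^{\otimes s}\bigl(-\sum_{i=1}^n a_ip_i-s\,p_{n+1}\bigr)=\mathsf{p}^\ast\omega_{\log}^{\otimes s}\bigl(-\sum_{i=1}^n a_ip_i\bigr)$, so the universal root on the $(n+1)$-marked space is $\mathsf{p}^\ast\mathcal{L}$. For $s$ in the range $[0,r]$ a local analysis on the $\mathbb{P}^1$-bubble carrying $p_{n+1}$ where stabilisation contracts it shows that the formation of $R^\bullet\pi_\ast\mathcal{L}$ commutes with the base change $\mathsf{p}$, whence $\widetilde\Omega_{g,n+1}(r,s;a,s)=\mathsf{p}^\ast\widetilde\Omega_{g,n}(r,s;a)$ and $\Omega_{g,n+1}(r,s;a,s)=\pi^\ast\Omega_{g,n}(r,s;a)$; outside this range the bubble contributes cohomology and the comparison acquires a term proportional to $\psi_{n+1}$, so the unit fails to be flat.

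I expect the technical heart, and the main obstacle, to be the boundary step of the second paragraph: controlling $R^\bullet\pi_\ast\mathcal{L}$ under restriction to a boundary divisor together with the normalisation of the node, and then the orbifold-degree bookkeeping for $\epsilon_\ast$ in the $\ZZ/r\ZZ$-modified structure that makes the factor $\tfrac{1}{r}$ in the metric emerge, simultaneously for the separating case (one admissible node type) and the non-separating case ($r$ of them). Granting this, the rest is formal: multiplicativity of $\widetilde\Omega$ is just the exponential of additivity of Chern characters, and the unit axioms reduce to a line-bundle comparison on the universal curve, a base-change statement conditioned on $0\leq s\leq r$, and the automorphism count on $\overline{\mathcal{M}}_{0,3}$.
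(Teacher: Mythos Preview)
The paper does not contain a proof of this theorem: it is quoted as a result of \cite{LPSZ17} and used as input, so there is no argument in the paper to compare your proposal against. Your outline is a plausible reconstruction of the geometric strategy one expects behind \cite{LPSZ17}—checking $S_n$-equivariance from Chiodo's formula, deducing the gluing axioms from a normalisation exact sequence for $R^\bullet\pi_\ast\mathcal{L}$ together with the $\ZZ/r\ZZ$-gerbe bookkeeping for $\epsilon_\ast$, and handling the unit via a line-bundle comparison on the universal curve—but the details you flag as the ``technical heart'' are precisely what is deferred to that reference.

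The one place where there is something to compare is the flat-unit axiom. The paper does independently establish $\pi^\ast\Omega_{g,n}(r,s;a)=\Omega_{g,n+1}(r,s;a,s)$, and in fact for \emph{all} integers $s$ (property~\eqref{prop:pullback} of Theorem~\ref{thm:properties}), but by a different route from the geometric base-change you sketch. It takes the flat-unit statement for $0\le s\le r$ from \cite{LPSZ17} as a black box, then uses the Bernoulli-polynomial shift identities (properties~\eqref{prop:shift:s} and~\eqref{prop:shift:ai}) to move an arbitrary $s$ into that range, applies $\pi^\ast$ there, and shifts back, checking via Newton's identities that the resulting $\kappa$- and $\psi$-corrections reabsorb exactly. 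So where you would argue by analysing the contracted $\mathbb{P}^1$-bubble and base change for $R^\bullet\pi_\ast\mathcal{L}$ directly, the paper argues purely formally from Chiodo's explicit formula and symmetric-function manipulations. Your approach is more conceptual and explains \emph{why} the range $[0,r]$ is distinguished; the paper's approach is more mechanical but avoids any further geometry on $\overline{\mathcal{M}}_{g;a}^{r,s}$.
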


Notice that the above result is restricted to $1 \le a_1,\dots,\le r$. A relation among $\Omega$-classes with $a_i$ possibly outside this range will be given in Section \ref{sec:Omega:properties}.

\subsection{$\Omega$-classes as a sum over stable graphs}

By expanding the exponential \eqref{eqn:Chiodo:formula}, we find an expression of the $\Omega$-classes as a sum over decorated stable graphs in which vertices, leaves, and edges carry cohomology classes multiplied by sums of products of Bernoulli polynomials. However, a correct expansion of \eqref{eqn:Chiodo:formula} into an expression in terms of stable graphs has to carefully take into account all possible self-intersections, and re-expand each self-intersected edge into the Chern class of its normal bundle. The result of this procedure is written down in clean form in the following statement.

\begin{cor}[{\cite[Corollary 4]{JPPZ17}}] \label{cor:Omega:Exp}
	The class $\Omega_{g,n}^{[x]}(r,s;a)$ is equal to
	\begin{equation}
	\begin{split}
		\sum_{\Gamma \in \mathsf{G}_{g,n}} \sum_{w \in \mathsf{W}_{\Gamma,r,s}}
		\frac{r^{2g-1-h^1(\Gamma)}}{|\Aut{(\Gamma)}|} \xi_{\Gamma,\ast}
		& \;
		\prod_{\mathclap{v \in V(\Gamma)}} \;
			\exp\left(
				\sum_{m \ge 1} \frac{(-x)^{m}B_{m+1}(\frac{s}{r})}{m(m+1)} \kappa_m(v)
			\right) \\
		\times \; & \;
		\prod_{\mathclap{\substack{e \in E(\Gamma) \\ e = (h,h')}}} \;
			\frac{1 - \exp\left(
				- \sum_{m \ge 1} \frac{(-x)^m B_{m+1}(\frac{w(h)}{r})}{m(m+1)} \bigl( (\psi_{h})^m - (-\psi_{h'})^m \bigr)
			\right)}{\psi_h + \psi_{h'}} \\
		\times \; & \;
		\prod_{i=1}^n \;
			\exp\left(
				- \sum_{m \ge 1} \frac{(-x)^m B_{m+1}(\frac{a_i}{r})}{m(m+1)} \psi_{i}^m
			\right).
	\end{split}
	\end{equation}
	Here $\mathsf{G}_{g,n}$ is the finite set of stable graphs of genus $g$ with $n$ legs, $\mathsf{W}_{\Gamma,r,s}$ is the finite set of half-edges decorations with an integer in $\{0, \dots, r-1\}$ in such a way that the leaf $i$ is decorated by $a_i$, decorations of half-edges forming the same edge $e \in E(\Gamma)$ sum up to $r$, and locally on each vertex $v \in V(\Gamma)$ the sum of all decorations is congruent to $(2g - 2 + n)s$ modulo $r$.
\end{cor}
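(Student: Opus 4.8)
\emph{Proof strategy.} The plan is to substitute Chiodo's formula \eqref{eqn:Chiodo:formula} into the definition \eqref{eqn:Omega} of the $\Omega$-class and expand the exponential, carefully recording the excess intersections produced by the boundary terms. Using $(m-1)!\cdot\frac{B_{m+1}(\cdot)}{(m+1)!}=\frac{B_{m+1}(\cdot)}{m(m+1)}$, the exponent $\sum_{m\ge 1}(-x)^m(m-1)!\,\ch_m(r,s;a)$ splits into a \emph{vertex} part built from the $\kappa_m$'s, a \emph{leaf} part built from the $\psi_i^m$'s, and a \emph{boundary} part
\[ \frac{r}{2}\sum_{a=0}^{r-1}\,\sum_{m\ge 1}\frac{(-x)^m B_{m+1}(a/r)}{m(m+1)}\;j_{a,*}\frac{(\psi')^m-(-\psi'')^m}{\psi'+\psi''}. \]
Since the $\kappa$- and $\psi$-classes on $\overline{\mathcal{M}}_{g;a}^{r,s}$ are pulled back along $\epsilon$, the projection formula lets one pull the exponentials of the first two parts out of $\epsilon_*$; all the real work is in expanding the exponential of the boundary part and pushing it forward to $\overline{\mathcal{M}}_{g,n}$.

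First I would expand $\exp$ of the boundary part into a sum of products $j_{a_1,*}(\xi_1)\cdots j_{a_k,*}(\xi_k)$ and rewrite each product through the self- and mutual-intersection calculus of boundary strata: $j_a^*j_{a,*}$ is multiplication by the first Chern class of the normal bundle to the node, namely $-\psi'-\psi''$, while distinct boundary divisors meet transversally along deeper strata. Grouping the resulting iterated pushforwards according to the combinatorial type of nodal curve they parametrise converts the sum into a sum over stable graphs $\Gamma\in\mathsf{G}_{g,n}$: the multiset of boundary divisors used, with multiplicities recording self-intersections, produces $\Gamma$ together with half-edge decorations $w\in\mathsf{W}_{\Gamma,r,s}$ encoding the node types $a$, the iterated gluing becomes the single map $\xi_{\Gamma,*}$, the symmetry of the expansion and the branch ambiguity $a\leftrightarrow r-a$ produce the factor $1/|\Aut(\Gamma)|$, the power $r^{2g-1-h^1(\Gamma)}$ records the degree of $\epsilon$ along the stratum of $\Gamma$ (it is $r^{2g-1}$ over the main stratum and is lowered by the extra $\ZZ/r$-stabilisers at the nodes), and the constant $r/2$ together with the sum over both labellings of each edge assembles into the decoration sum $\sum_{w}$.

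The crucial step will be the per-edge telescoping. Fixing an edge $e=(h,h')$ of $\Gamma$ decorated by $a=w(h)$, the resummation of all normal-bundle re-expansions attached to $e$ produces $\sum_{k\ge 1}\frac{1}{k!}\,j_{a,*}\bigl(\xi^k(-\psi_h-\psi_{h'})^{k-1}\bigr)$ with $\xi=\sum_{m\ge 1}\frac{(-x)^m B_{m+1}(a/r)}{m(m+1)}\cdot\frac{(\psi_h)^m-(-\psi_{h'})^m}{\psi_h+\psi_{h'}}$; since the denominator $\psi_h+\psi_{h'}$ of $\xi$ cancels against the normal-bundle class $-(\psi_h+\psi_{h'})$, this collapses to
\[ j_{a,*}\left(\frac{1-\exp\bigl(-\sum_{m\ge 1}\frac{(-x)^m B_{m+1}(a/r)}{m(m+1)}\bigl((\psi_h)^m-(-\psi_{h'})^m\bigr)\bigr)}{\psi_h+\psi_{h'}}\right), \]
which is exactly the edge factor in the claimed formula. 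Reassembling the vertex exponentials $\exp\bigl(\sum_{m\ge1}\frac{(-x)^m B_{m+1}(s/r)}{m(m+1)}\kappa_m(v)\bigr)$ over $v\in V(\Gamma)$, the leaf exponentials over the $n$ legs, these edge factors over $e\in E(\Gamma)$, and finally $\xi_{\Gamma,*}$ and the prefactor, then gives the statement.

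I expect the main obstacle to be the bookkeeping of self- and nested intersections: turning $\exp$ of a sum of $j_{a,*}$-pushforwards into a genuine sum over stable graphs forces one to re-expand every self-intersected edge through the Chern class of its normal bundle and to check that all combinatorial constants — automorphisms of $\Gamma$, the factor $r/2$, and the power of $r$ — match on the nose. This is precisely the content of \cite[Corollary 4]{JPPZ17}, established there by exactly this graph-sum expansion of Chiodo's formula, so in practice I would follow that argument, only verifying that allowing a general parameter $s$ (rather than the specific value appearing in the double-ramification setting) leaves the manipulation intact.
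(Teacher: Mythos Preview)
Your proposal is correct and follows exactly the approach the paper itself describes: the paper does not give an independent proof of this corollary but cites it from \cite[Corollary~4]{JPPZ17}, merely remarking that one obtains the stable-graph expression by expanding the exponential of Chiodo's formula and carefully handling all self-intersections by re-expanding each self-intersected edge through the Chern class of its normal bundle. Your sketch spells out precisely this procedure---the splitting into vertex/leaf/boundary parts, the excess-intersection bookkeeping producing the graph sum, the per-edge telescoping yielding the edge factor, and the tracking of the prefactor $r^{2g-1-h^1(\Gamma)}/|\Aut(\Gamma)|$---and, like the paper, ultimately defers to \cite{JPPZ17} for the detailed verification.
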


\subsection{Riemann--Roch for $\Omega$-classes}
\label{subsec:RR}

The Riemann--Roch theorem for an $r$-th root $L$ of $\omega_{\log}^{\otimes s}(-\sum_i a_i p_i)$ provides the following relation:
\begin{equation}
	\frac{(2g - 2 + n)s - \sum_i a_i}{r} - g + 1 = h^0(C,L) - h^1(C,L).
\end{equation}
In some cases, i.e. for particular parametrisations of $r,s,a_i$ and for topologies $(g,n)$, it can happen that either $h^0$ or $h^1$ vanish, turning the derived pushforward $R^{\bullet} \pi_{\ast}{\mathcal L}$ into a vector bundle. If that happens, the Riemann--Roch formula provides a bound for the complex cohomological degree of $\Omega$:
\begin{equation}
	[\deg_{\CC} = k].\Omega_{g,n}(r,s;a) = 0, \qquad \qquad \text{ for } k > \rank(R^{\bullet} \pi_{\ast}{\mathcal L}),
\end{equation}
which are usually trivial or not depending on whether the rank $< 3g - 3 + n$. 
\begin{enumerate}
	\item
	One of these instances is provided in genus zero by the following result of Jarvis, Kimura, and Vaintrob.

	\begin{thm}[{\cite[Proposition 4.4]{JKV01}}] \label{thm:JKV}
		Let $g=0$, $n \geq 3$, $s=0$, and consider $a_i$ all strictly positive except for at most a single $a_j$ which can be positive, or zero, or equal to $-1$. Then every $r$-th root of $\omega_{\log}^{\otimes s}(-\sum_i a_i p_i)$ does not have global sections, that is, we have $h^0 = 0$.
	\end{thm}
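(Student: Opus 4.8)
The statement asserts that $H^0(C,L)=0$ for every genus-$0$ stable curve $[C,p_1,\dots,p_n]$ and every $r$-th root $L$ of $\omega_{\log}^{\otimes s}(-\sum_i a_ip_i)$ — which, since $s=0$, is simply $\mathcal{O}_C(-\sum_i a_ip_i)$. My plan is to prove this by induction on the number of irreducible components of $C$; as $g=0$ the dual graph of $C$ is a tree, so this is an induction on the number of its vertices.

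Base case ($C\cong\mathbb{P}^1$): here $L$ is an ordinary line bundle with $L^{\otimes r}\cong\mathcal{O}_{\mathbb{P}^1}(-\sum_i a_ip_i)$, so $\deg L=-\tfrac1r\sum_i a_i$. The hypothesis on the $a_i$ forces $\sum_i a_i\ge(n-1)-1=n-2\ge1$, while the modular constraint $\sum_i a_i\equiv(2g-2+n)s=0\pmod r$ makes $\sum_i a_i$ a positive multiple of $r$; hence $\deg L\le-1$ and $H^0(\mathbb{P}^1,L)=0$.

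Inductive step: assume $C$ has at least two components and choose a leaf component $C_0$ of the dual tree, meeting the remaining curve $C'$ in a single node $q$. Let $I$ be the set of indices of the markings carried by $C_0$, so $2\le|I|\le n-2$ by stability, and let $b\in\{0,\dots,r-1\}$ be the type of $L$ along the $C_0$-branch of $q$. For a section $s$ of $L$, write $s_0=s|_{C_0}$ and $s'=s|_{C'}$. A degree computation on $C_0\cong\mathbb{P}^1$, using $\sum_{i\in I}a_i\ge|I|-2\ge0$ together with the local modular constraint $\sum_{i\in I}a_i+b\equiv0\pmod r$, shows that $L|_{C_0}$ has strictly negative degree — forcing $s_0=0$ — in every case save the single borderline configuration $b=0$, $I=\{j,k\}$, $a_j=-1$, $a_k=1$, in which $L|_{C_0}\cong\mathcal{O}$ and $s_0$ is a constant. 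In that borderline case $C'$ carries only markings of type $\ge1$ together with the untwisted node $q$ of type $0$, so the inductive hypothesis yields $H^0(C',L|_{C'})=0$; hence $s'=0$, and the gluing identity $s_0(q)=s'(q)=0$ forces $s_0=0$ as well, so $s=0$. In every other case $s_0=0$ by the degree bound, and $s'$ is a section of $L|_{C'}$, in which $q$ now appears as a marking of type $r-b\ge1$ if $b\ne0$, while if $b=0$ the untwisted gluing gives $s'(q)=0$, so that $s'$ is actually a section of $L|_{C'}\otimes\mathcal{O}_{C'}(-q)$ — the $r$-th root on $C'$ with $q$ recorded at the positive type $r$. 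Either way the type datum on $C'$ still satisfies the hypothesis, every newly created type being $\ge1$, and $C'$ has strictly fewer components, so the inductive hypothesis gives $s'=0$ and therefore $s=0$.

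The step I expect to be the main obstacle is precisely this bookkeeping at the node: one must ensure that removing a leaf never produces a curve carrying two special points of type $\le0$. This is what necessitates both the separate treatment of the degenerate configuration with $\deg L|_{C_0}=0$ and the device of absorbing the compulsory vanishing at an untwisted node into the twist of $L|_{C'}$, so that the exceptional point — if there is one — stays unique and the induction can close. Of course, one may instead simply invoke \cite[Proposition 4.4]{JKV01}.
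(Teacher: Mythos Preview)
The paper does not supply its own proof of this statement: Theorem~\ref{thm:JKV} is quoted from \cite[Proposition~4.4]{JKV01} and invoked only to obtain the degree bound~\eqref{eqn:RR:JKV}. Your closing remark that one may simply cite that reference is precisely what the paper does, so there is no in-paper argument to compare against.

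That said, your inductive argument on the number of irreducible components is correct and is the standard route to such vanishing results. The base case is a clean degree count, and in the inductive step you have correctly isolated the unique borderline configuration ($b=0$, $|I|=2$, $a_j=-1$, $a_k=1$) in which $L|_{C_0}\cong\mathcal{O}$, disposing of it via the gluing condition at the untwisted node; in every other case the twist $L|_{C'}(-q)$ at an untwisted node restores strict positivity of the type at $q$, so that the residual curve $C'$ again carries at most one non-positive $a$ and the induction closes. The only point you might make more explicit is that at a genuinely twisted node ($0<b<r$) there is \emph{no} gluing condition --- the pushforward of $L$ to the coarse curve fails to be locally free there and decomposes as a direct sum from the two branches --- which is what justifies passing from $s_0=0$ directly to an unconstrained $s'\in H^0(C',L|_{C'})$.
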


	\noindent
	Under the condition of the theorem above, the rank of $R^{\bullet} \pi_{\ast}{\mathcal L}$ equals $h^1$, and therefore one gets:
	\begin{equation}\label{eqn:RR:JKV}
		[\deg_{\CC} = k].\Omega_{0,n}(r,s;a) = 0, \qquad \qquad \text{ for } k > \frac{\sum_i a_i }{r} - 1.
	\end{equation}

	\item
	Another instance is provided by the negative $s$ case with $a_i$ all positive. Let $s = -s'$ for $s'$ a positive integer. In this case $\omega_{C,\log}^{\otimes -s'}(-\sum a_i)$ has strictly negative degree on all connected components of every stratum, therefore implying again that $h^0 = 0$. Thus, the rank equals $h^1$ and one gets:
	\begin{equation}\label{eqn:RR:Norb}
		[\deg_{\CC} = k].\Omega_{g,n}(r,s;a) = 0,
		\qquad \qquad
		\text{for } k > \frac{(2g - 2 + n)s' + r(g-1) + \sum_i a_i }{r},
	\end{equation}
	which is interesting when $(2g - 2 + n)s' + r(g-1) + \sum_i a_i  < (3g - 3 +n)r$. For instance, if $r = 2$ and all $a_i = s' = 1$, one gets that $\Omega_{g,n}(2,-1;1^n)$ has top complex degree equal to $2g - 2 + n$, and that top degree in fact defines (up to prefactors) a cohomological field theory $\Theta_{g,n}$ with beautiful properties \cite{Nor21} (its intersection numbers are generated by topological recursion in the sense of Eynard and Orantin, the associated partition function is conjecturally a solution of the KdV hierarchy which arises from the BGW random matrix model, and it is related to the volumes of the moduli spaces of super Riemann surfaces). 
\end{enumerate}

\section{Proofs}
\label{sec:proofs}

In this section we provide the proofs to the statements in the introduction.

\begin{proof}[{Proof of Theorem \ref{thm:main}}]
	Firstly, let us recall a generalised Gauss--Bonnet formula, expressing the orbifold Euler characteristic of certain open orbifolds as integrals of the Chern class of the logarithmic cotangent bundle. A proof of the formula can be found in \cite{CMZ20}.

	\begin{prop}[Gauss--Bonnet for open orbifolds]
		Let $\overline{M}$ be a compact smooth $k$-dimensional orbifold, let $D$ be a normal crossing divisor and $\overline{M} = M \setminus D$. Then the orbifold Euler characteristic of $M$ can be computed as
		\begin{equation}
			\chi(M)
			=
			(-1)^k \int_{\overline{M}} c_k\bigl( \Omega_{\overline{M}}^1(\log{D}) \bigr),
		\end{equation}
		where $c_k(\Omega_{\overline{M}}^1(\log{D}))$ is the $k$-th Chern class of the logarithmic cotangent bundle.
	\end{prop}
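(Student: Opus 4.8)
The plan is to reduce the statement to the classical Chern--Gauss--Bonnet theorem for compact orbifolds, combined with the additivity of the orbifold Euler characteristic under stratification. Since $\Omega^{1}_{\overline{M}}(\log D)$ is dual to the logarithmic tangent bundle $T_{\overline{M}}(-\log D)$ and both have rank $k$, one has $c_{k}(\Omega^{1}_{\overline{M}}(\log D)) = (-1)^{k} c_{k}(T_{\overline{M}}(-\log D))$, so it suffices to prove $\chi(M) = \int_{\overline{M}} c_{k}(T_{\overline{M}}(-\log D))$. Write $D = D_{1} \cup \dots \cup D_{m}$ for the smooth irreducible components of $D$; by the normal crossing hypothesis every intersection $D_{I} := \bigcap_{i \in I} D_{i}$, for $I \subseteq \{1,\dots,m\}$, is a smooth compact suborbifold of codimension $|I|$, with $D_{\emptyset} = \overline{M}$ (the case of self-intersecting components is addressed at the end).

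Next I would pin down the total Chern class of $T_{\overline{M}}(-\log D)$. Dualising the residue sequence $0 \to \Omega^{1}_{\overline{M}} \to \Omega^{1}_{\overline{M}}(\log D) \to \bigoplus_{i} (\iota_{D_{i}})_{*}\mathcal{O}_{D_{i}} \to 0$ gives the exact sequence
\begin{equation*}
	0 \longrightarrow T_{\overline{M}}(-\log D) \longrightarrow T_{\overline{M}} \longrightarrow \bigoplus_{i} (\iota_{D_{i}})_{*} N_{D_{i}/\overline{M}} \longrightarrow 0 ,
\end{equation*}
and since each summand on the right carries the two-term locally free resolution $0 \to \mathcal{O}_{\overline{M}} \to \mathcal{O}_{\overline{M}}(D_{i}) \to (\iota_{D_{i}})_{*} N_{D_{i}/\overline{M}} \to 0$, multiplicativity of the total Chern class yields $c(T_{\overline{M}}(-\log D)) = c(T_{\overline{M}}) \cdot \prod_{i}(1+[D_{i}])^{-1}$ in $H^{*}(\overline{M};\QQ)$.

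Then I would compute $\chi(M)$ by stratifying $\overline{M}$ into the locally closed strata $D_{I} \setminus \bigcup_{j \notin I} D_{j}$. Additivity of the orbifold Euler characteristic, followed by Möbius inversion over the Boolean lattice of subsets, gives $\chi(M) = \sum_{I}(-1)^{|I|}\chi(D_{I})$. Since each $D_{I}$ is compact, Chern--Gauss--Bonnet gives $\chi(D_{I}) = \int_{D_{I}} c_{\dim D_{I}}(T_{D_{I}})$; the normal bundle sequence $0 \to T_{D_{I}} \to T_{\overline{M}}|_{D_{I}} \to \bigoplus_{i \in I} N_{D_{i}}|_{D_{I}} \to 0$ gives $c(T_{D_{I}}) = c(T_{\overline{M}}|_{D_{I}}) \cdot \prod_{i \in I}(1+[D_{i}]|_{D_{I}})^{-1}$, and the projection formula together with $[D_{I}] = \prod_{i \in I}[D_{i}]$ rewrites this as
\begin{equation*}
	\chi(D_{I}) = \int_{\overline{M}} c(T_{\overline{M}}) \cdot \prod_{i \in I} \frac{[D_{i}]}{1+[D_{i}]} ,
\end{equation*}
the integrand being understood in top degree $2k$. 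Summing over $I$ and using the elementary identity $\sum_{I \subseteq \{1,\dots,m\}} \prod_{i \in I} \bigl( \tfrac{-[D_{i}]}{1+[D_{i}]} \bigr) = \prod_{i} \tfrac{1}{1+[D_{i}]}$, one obtains $\chi(M) = \int_{\overline{M}} \bigl[ c(T_{\overline{M}}) \prod_{i}(1+[D_{i}])^{-1} \bigr]_{2k} = \int_{\overline{M}} c_{k}(T_{\overline{M}}(-\log D))$, which is exactly the reduced statement.

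I expect the real work to lie in the orbifold bookkeeping rather than in the algebra: one must invoke Chern--Gauss--Bonnet and the additivity of the Euler characteristic in the orbifold category (both classical, going back to Satake), and one must make precise what ``normal crossing divisor'' means in the orbifold/stack setting --- for $\overline{\mathcal{M}}_{g,n}$ the boundary components self-intersect, so each $D_{i}$ should be read as a local branch, or equivalently one works on the normalisations $\widetilde{D}_{i}$ and absorbs the self-intersection multiplicities into the classes $[D_{I}]$, which does not affect the formal computation. As a cross-check one may instead argue from the fact that the logarithmic de Rham complex of $(\overline{M},D)$ computes $H^{*}(M;\CC)$, combined with Hirzebruch--Riemann--Roch and the identity $\sum_{p}(-1)^{p}\ch(\Lambda^{p}E) = (-1)^{k} e^{c_{1}(E)} c_{k}(E)\, \mathrm{Td}(E)^{-1}$ (valid for any bundle $E$ of rank $k$) applied to $E = \Omega^{1}_{\overline{M}}(\log D)$; the stratification argument above, however, seems the most economical, and is presumably close to the one in \cite{CMZ20}.
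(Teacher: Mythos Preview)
The paper does not prove this proposition at all: it is stated as a recalled fact, with the proof deferred entirely to \cite{CMZ20}. So there is no ``paper's own proof'' to compare against; your proposal supplies what the paper omits.

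That said, your argument is correct and is exactly the standard proof of this statement (and essentially the one in the cited reference): stratify by the closed intersections $D_I$, apply Chern--Gauss--Bonnet on each compact stratum, use the normal-bundle sequence plus projection formula to rewrite $\chi(D_I)$ as an integral over $\overline{M}$, and collapse the inclusion--exclusion sum via $\sum_I \prod_{i\in I}\tfrac{-[D_i]}{1+[D_i]}=\prod_i(1+[D_i])^{-1}$. The alternative Hirzebruch--Riemann--Roch argument you mention at the end is also valid and is sometimes preferred precisely because it bypasses the self-intersection bookkeeping. One small caution: for the application to $\overline{\mathcal{M}}_{g,n}$ the boundary divisors genuinely self-intersect and have nontrivial automorphisms as substacks, so the passage ``which does not affect the formal computation'' deserves one more sentence --- the clean way is to phrase everything in terms of the normalisations $\widetilde{D}_I \to \overline{M}$ and use pushforward along these finite gerbe-like maps, which is exactly what the stable-graph formalism encodes.
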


	Let us apply the above proposition to compute the Euler characteristic $\chi_{g,n} = \chi(\mathcal{M}_{g,n})$. The fibre of the logarithmic cotangent bundle of $\overline{\mathcal{M}}_{g,n}$ over a curve $(C,p_1,\dots,p_n)$ is given by the space of quadratic differentials on $C$ with simple poles at the marked points, that is
	\begin{equation*}
		H^{0}\bigl(C, \omega_C^{\otimes 2}({\textstyle\sum_{i}} p_i) \bigr).
	\end{equation*}
	On the other hand, consider the $\Omega$-class with parameters $r = 1$, $s = -1$ and all $a_i = 0$. As explained in Subsection \ref{subsec:RR}, $\Omega$ is the Chern class of an actual bundle whose fibre over a curve $(C,p_1,\dots,p_n)$ is isomorphic to $H^{1}(C,(\omega_{C,\log})^{-1})$. By Serre duality
	\begin{equation*}
		H^{1}(C,(\omega_{C,\log})^{-1}) \cong H^{0}\bigl(C, \omega_C^{\otimes 2}({\textstyle\sum_{i}} p_i) \bigr)^{\vee}.	
	\end{equation*}
	Thus, we find Proposition \ref{prop:chi:Omega}:
	\begin{equation*}
		\chi_{g,n}
		=
		\int_{\overline{\mathcal{M}}_{g,n}} \Ch_{g,n}(1,-1;0).
	\end{equation*}
	Specialising formula~\eqref{eqn:Chiodo:formula}, we get $\Ch_{g,n}(1,-1;0) = \Lambda(-1) \exp( - \sum_{m \ge 1} \frac{1}{m} \kappa_m )$. This is a simple consequence of the identity $B_{m}(-1) = B_m + (-1)^m m$, together with Mumford's formula for $\lambda$-classes. Here $\Lambda(-1) = \sum_{i=1}^g (-1)^i \lambda_i$ is the total Chern class of the dual of the Hodge bundle. We can convert the evaluation of the above class into a combination of simple Hodge integrals, using Lemma~\ref{lem:push:kappa}: the values $v_k = -1$ are given by $u_m = -\frac{1}{m}$ through Equation~\eqref{eqn:change:u:v}. In turn, we find 
	\begin{equation*}
	\begin{split}
		\chi_{g,n}
		& =
		\int_{\overline{\mathcal{M}}_{g,n}}
			\Lambda(-1)
			\exp\biggl( - \sum_{m \ge 1} \frac{1}{m} \kappa_m \biggr) \\
		& =
		\int_{\overline{\mathcal{M}}_{g,n}} \Lambda(-1)
		+
		\sum_{\ell \ge 1} \frac{(-1)^{\ell}}{\ell!} \sum_{\mu_1,\dots,\mu_{\ell} \ge 1}
			\int_{\overline{\mathcal{M}}_{g,n + \ell}} \Lambda(-1)\prod_{j = 1}^{\ell} \psi_{n+j}^{\mu_j + 1}.
	\end{split}
	\end{equation*}
	Notice that the sum over $\ell$ terminates at $\ell = 3g - 3 + n$, and the sum over $\mu$'s is also finite since we have $\mu_1 + \cdots + \mu_{\ell} \le 3g - 3 + n$. We also observe that the first summand vanishes for degree reasons, unless $(g,n) = (0,3)$ or $(1,1)$. In these cases,
	\begin{equation*}
		\int_{\overline{\mathcal{M}}_{0,3}} \Lambda(-1)
		=
		\int_{\overline{\mathcal{M}}_{0,3}} 1 = 1,
		\qquad\qquad
		\int_{\overline{\mathcal{M}}_{1,1}} \Lambda(-1)
		=
		- \int_{\overline{\mathcal{M}}_{1,1}} \lambda_1 = - \frac{1}{24}.
	\end{equation*}
	Collapsing geometric series into their compact form allows a re-arranging of signs lead to the statement:
	\begin{equation*}
	\begin{split}
		\chi_{g,n}
		& =
		\sum_{\ell \ge 0} \frac{(-1)^{\ell}}{\ell!} \int_{\overline{\mathcal{M}}_{g,n + \ell}}
			\frac{\Lambda(-1)}{\prod_{j = 1}^{\ell} 1 - \psi_{n+j}} \psi_{n+1}^{2} \cdots \psi_{n+\ell}^{2} \\
		& =
		(-1)^{3g-3+n} \sum_{\ell \ge 0} \frac{1}{\ell!} \int_{\overline{\mathcal{M}}_{g,n + \ell}}
			\frac{\Lambda(1)}{\prod_{j = 1}^{\ell} 1 + \psi_{n+j}} \psi_{n+1}^{2} \cdots \psi_{n+\ell}^{2}.
	\end{split}
	\end{equation*}
	This concludes the proof of Theorem \ref{thm:main}.
\end{proof}

\begin{proof}[{Proof of Corollary~\ref{cor:HZ}}]
	Thanks to the intersection-theoretic expression of the Euler characteristic, together with an explicit formula for Hodge integrals due to Dubrovin--Yang--Zagier (see \cite[Section~1.3]{DYZ17}), we are able to give a new proof of the Harer--Zagier formula. 
	
	\medskip

	\textbf{Claim 1.} The Euler characteristic satisfies\footnote{The relation $\chi_{g,n+1} = - (2g - 2 + n) \chi_{g,n}$ easily follows from a short exact sequence involving mapping class groups (see \cite[Section~6]{HZ86}). Here we provide another proof that only uses the intersection-theoretic expression for $\chi_{g,n}$.} $\chi_{g,n+1} = -(2g-2+n) \chi_{g,n}$. Indeed, denoting the forgetful morphism by $\pi \colon \overline{\mathcal{M}}_{g,n+1} \to \overline{\mathcal{M}}_{g,n}$, we have
	\begin{equation*}
	\begin{split}
		\chi_{g,n + 1}& =
		\int_{\overline{\mathcal{M}}_{g,n+1}}
			\Lambda(-1)
			\exp\biggl( - \sum_{m \ge 1} \frac{1}{m} \kappa_m \biggr) \\
			& =
		\int_{\overline{\mathcal{M}}_{g,n+1}}
			\pi^*\Lambda(-1)
			\exp\biggl( - \sum_{m \ge 1} \frac{1}{m} (\pi^*\kappa_m +\psi_{n+1}^m)\biggr) \\
			& =
		\int_{\overline{\mathcal{M}}_{g,n+1}}
			\exp\biggl( - \sum_{m \ge 1} \frac{1}{m}\psi_{n+1}^m\biggr)\pi^*\Lambda(-1)
			\pi^*\exp\biggl( - \sum_{m \ge 1} \frac{1}{m} \kappa_m \biggr) \\
			& =
		\int_{\overline{\mathcal{M}}_{g,n+1}}
			(1-\psi_{n+1})\pi^*\Lambda(-1)
			\pi^*\exp\biggl( - \sum_{m \ge 1} \frac{1}{m} \kappa_m \biggr) \\
			& =
		-\int_{\overline{\mathcal{M}}_{g,n}}
			(\pi_*\psi_{n+1})\Lambda(-1)
			\exp\biggl( - \sum_{m \ge 1} \frac{1}{m} \kappa_m \biggr) \\
			& =
		- (2g-2+n) \int_{\overline{\mathcal{M}}_{g,n}}
			\Lambda(-1)
			\exp\biggl( - \sum_{m \ge 1} \frac{1}{m} \kappa_m \biggr).
	\end{split}
	\end{equation*}
	Here we used the fact $\pi^{\ast}$ is a ring homomorphism, together with the property for $\pi^{\ast}\Lambda(-1) = \Lambda(-1)$ and the relation $\pi^{\ast}\kappa_m = \kappa_m - \psi_{n+1}^m$.
	
	\medskip

	\textbf{Claim 2.} The Harer--Zagier relation holds true. Indeed, as a consequence of Claim 1, we just have to compute $\chi_{0,3}$, $\chi_{1,1}$ and $\chi_{g,0}$ for $g \ge 2$. Clearly, $\chi_{0,3} = 1$, so that $\chi_{0,n} = (-1)^{n-3} (n - 3)!$.	In genus one we compute
	\begin{equation*}
		\chi_{1,1}
		=
		\int_{\overline{\mathcal{M}}_{1,1}} \Lambda(-1)
		-
		\int_{\overline{\mathcal{M}}_{1,2}} \Lambda(-1) \psi_2^2
		=
		-\frac{1}{12}.
	\end{equation*}
	Thus, the relation $\chi_{1,n} = (-1)^{n} \frac{(n - 1)!}{12}$. Finally, in genus $g \ge 2$, we can use the explicit formula of \cite[Section~1.3]{DYZ17}, namely
	\begin{equation*}	
		\sum_{\ell \ge 1} \frac{1}{\ell!} \sum_{\mu_1,\ldots,\mu_\ell \ge 1}
			\int_{\overline{\mathcal{M}}_{g,\ell}} \Lambda(-1) \prod_{i=1}^{\ell} \psi_i^{\mu_i + 1}
		=
		\frac{B_{2g}}{2g(2g-2)}.
	\end{equation*}
	As the left-hand side equals $\chi_{g,0}$ by Proposition~\ref{prop:chi:Omega}, we have the thesis. This concludes the proof of Theorem~\ref{thm:main}.
\end{proof}

\section{Properties, symmetries and parameters shift of the \texorpdfstring{$\Omega$}{Omega}-CohFT}
\label{sec:Omega:properties}

Mainly within the past five years, applications of $\Omega$-classes in enumerative geometry have been blooming in the literature. Interestingly enough, these applications arise from quite different contexts and with different motivations.

\medskip

A complete list of recent papers employing $\Omega$-classes is out of the scope of this work and, as far as we know would likely be outdated soon. Instead, the aim of this section is to investigate, prove, extend and collect properties of $\Omega$-classes as a reference tool for interested mathematicians in the field.

\begin{thm}\label{thm:properties}
	Fix $g,n \geq 0$ integers such that $2g - 2 + n > 0$. Let $r$ and $s$ be integers with $r$ positive, and $1 \le a_1, \ldots, a_n \le r$ integers satisfying the modular constraint $a_1+a_2+\cdots+a_n \equiv (2g-2+n)s \pmod{r}$. The $\Omega$-classes satisfy the following properties.
	\begin{enumerate}[(i)]
		\item\label{prop:shift:s}
		Shift of $s$:
		\begin{equation}
			\Ch^{[x]}(r,s+r;a_1, \dots, a_n)
			=
			\Ch^{[x]}(r,s;a_1, \dots, a_n) \cdot \exp\Biggl( \sum_{m \ge 1} \frac{(-x)^m}{m} \left(\frac{s}{r}\right)^m \kappa_m \Biggr)
		\end{equation}

		\item\label{prop:shift:ai}
		Shift of $a_i$:
		\begin{equation}
			\Ch^{[x]}(r,s;a_1, \dots, a_i + r, \dots, a_n)
			=
			\Ch^{[x]}(r,s;a_1, \dots, a_n) \cdot\left( 1 + x\frac{a_i}{r}\psi_i\right)
		\end{equation}

		\item\label{prop:0:r:sym}
		Zero and $r$ symmetry:
		\begin{equation}
		\begin{aligned}
			\Ch(r,0;a_1, \dots, a_n)
			& =
			\Ch(r,r;a_1, \dots, a_n) \\
			\Ch(r,s; a_1, \dots, 0, \dots, a_n)
			& =
			\Ch(r,s;a_1, \dots, r, \dots, a_n)
		\end{aligned}
		\end{equation}

		\item\label{prop:pullback}
		Pullback property:
		\begin{equation}
			\Ch(r,s;a_1, \dots, a_n, s)
			=
			\pi^{\ast}\Ch(r,s;a_1, \dots, a_n)
		\end{equation}

		\item\label{prop:string}
		\textup{(String equation).} For formal variables $x_1, \dots, x_{n+1}$ we have:
		\begin{equation}\label{eqn:string:Omega}
			\int_{\overline{\mathcal{M}}_{g, n+1}}
				\!\!\!\!\!
				\frac{\Omega(r,s; a_1, \dots, a_{n}, a_{n+1} = s)}{\prod_{i=1}^{n+1} (1 - x_i \psi_i)} \Bigg{|}_{x_{n+1} = 0}
				=
				(x_1 + \dots + x_n) \int_{\overline{\mathcal{M}}_{g, n}} \frac{\Omega(r,s; a_1, \dots, a_{n})}{\prod_{i=1}^n (1 - x_i \psi_i)}
		\end{equation}

		\item\label{prop:dilaton}
		\textup{(Dilaton equation).} For formal variables $x_1, \dots, x_{n+1}$ we have:
		\begin{equation}\label{eqn:dilaton:Omega}
			\frac{\partial}{\partial x_{n+1}}
			\int_{\overline{\mathcal{M}}_{g, n+1}}
				\!\!\!\!\!
				\frac{\Omega(r,s; a_1, \dots, a_{n}, a_{n+1} = s)}{\prod_{i=1}^{n+1} (1 - x_i \psi_i)} \Bigg{|}_{x_{n+1} = 0}
			=
			(2g - 2 + n) \int_{\overline{\mathcal{M}}_{g, n}} \frac{\Omega(r,s; a_1, \dots, a_{n})}{\prod_{i=1}^n (1 - x_i \psi_i)}
		\end{equation}
	\end{enumerate}
	Iterating the first two properties above, one finds:
	\begin{enumerate}[(I)]
		\item\label{prop:mult:shifts:s}
		Multiple shifts of $s$:
		\begin{equation}
		\begin{split}
			& \Ch^{[x]}(r,s+Nr; a_1, \dots, a_n) \\
			& \qquad\quad
			=
			\Ch^{[x]}(r,s;a_1, \dots, a_n) \cdot \exp\Biggl( \sum_{m \ge 1} \frac{(-x)^m}{m} p_m\left(\frac{s}{r}, \dots, \frac{s}{r} + N - 1 \right) \kappa_m \Biggr)
		\end{split}
		\end{equation}
		where $p_m$ is the power sum symmetric polynomial of degree $m$.

		\item\label{prop:mult:shifts:ai}
		Multiple shifts of $a_i$:
		\begin{equation}
			\Ch^{[x]}(r,s; a_1, \dots, a_i + N r, \dots, a_n)
			=
			\Ch^{[x]}(r,s; a_1, \dots, a_n) \cdot \prod_{t=0}^{N-1} \left( 1 + x\left(\frac{a_i}{r} + t \right) \psi_i\right)
		\end{equation}
	\end{enumerate}
\end{thm}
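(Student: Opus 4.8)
The plan is to read the elementary shifts (i) and (ii) directly off Chiodo's formula, deduce (iii) together with (I) and (II) from them, and then obtain the pullback relation (iv) and the string and dilaton equations (v), (vi) by transporting the relevant integrals down to $\overline{\mathcal{M}}_{g,n}$ along the forgetful morphism $\pi\colon\overline{\mathcal{M}}_{g,n+1}\to\overline{\mathcal{M}}_{g,n}$.

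First I would prove (i) and (ii) from \eqref{eqn:Chiodo:formula} (equivalently, from the graph expansion of Corollary~\ref{cor:Omega:Exp}). The key point is that the modular constraint, hence also the admissible half-edge decorations and the boundary strata entering \eqref{eqn:Chiodo:formula}, depend only on residues modulo $r$; since $(2g-2+n)r\equiv 0\pmod r$, they are unchanged when $s$ is replaced by $s+r$, or when a single weight $a_i$ is replaced by $a_i+r$. Consequently the only modification of $\ch_m(r,s;a)$ is in the coefficient of $\kappa_m$ for the $s$-shift, and of $\psi_i^m$ for the $a_i$-shift; by the Bernoulli identity $B_{m+1}(y+1)-B_{m+1}(y)=(m+1)y^m$ that coefficient changes by $+\tfrac{1}{m!}(s/r)^m$, respectively $-\tfrac{1}{m!}(a_i/r)^m$. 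Feeding this into \eqref{eqn:Omega} contributes, inside the exponential on the moduli of roots, the extra summand $\sum_{m\ge1}\tfrac{(-x)^m}{m}(s/r)^m\kappa_m$ in the first case, and $-\sum_{m\ge1}\tfrac{1}{m}\bigl(-x\tfrac{a_i}{r}\psi_i\bigr)^m=\log\bigl(1+x\tfrac{a_i}{r}\psi_i\bigr)$ in the second; since $\kappa_m$ and the leaf class $\psi_i$ are pulled back along $\epsilon$, the projection formula lets the corresponding factor — $\exp\bigl(\sum_{m\ge1}\tfrac{(-x)^m}{m}(s/r)^m\kappa_m\bigr)$, respectively $1+x\tfrac{a_i}{r}\psi_i$ — escape $\epsilon_\ast$, giving (i) and (ii). (Although the statement keeps $a_i\in[1,r]$, \eqref{eqn:Chiodo:formula} makes sense for arbitrary integer weights: geometrically $a_i\mapsto a_i+r$ amounts to twisting the universal root by $\mathcal{O}(-p_i)$, which changes $\ch(R^\bullet\pi_\ast\mathcal{L})$ by $-\ch(\mathcal{L}|_{p_i})=-e^{(a_i/r)\psi_i}$, and (ii) is precisely the continuation rule.) Property (iii) is the specialisation $s=0$, respectively $a_i=0$, where the new factor equals $\exp(0)=1$; and (I), (II) follow by iterating $N$ times, the accumulated $\kappa_m$-exponent being $\sum_{t=0}^{N-1}(s/r+t)^m=p_m(\tfrac{s}{r},\dots,\tfrac{s}{r}+N-1)$ and the accumulated leaf factor being $\prod_{t=0}^{N-1}\bigl(1+x(\tfrac{a_i}{r}+t)\psi_i\bigr)$.

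For the pullback property (iv) I would use that for $s\in[0,r]$ the $\Omega$-classes form a CohFT with (flat) unit $v_{s\bmod r}$ \cite{LPSZ17} — the identification $\mathbf 1=v_{s\bmod r}$ being forced by $\Omega_{0,3}(\mathbf 1,v_a,v_b)=\eta(v_a,v_b)$ — so that the unit axiom gives $\Omega(r,s;a_1,\dots,a_n,s)=\pi^\ast\Omega(r,s;a_1,\dots,a_n)$ (recalling $v_0=v_r$ via (iii) when $s=0$ or $s=r$). For general $s$ one extends by (I) and (II): writing $s=s_0+Nr$ with $s_0\in\{1,\dots,r\}$ and shifting both the parameter and the last weight $a_{n+1}$ from $s$ back to $s_0$, one picks up on $\overline{\mathcal{M}}_{g,n+1}$ the factor $\prod_{t=0}^{N-1}\bigl(1+x(\tfrac{s_0}{r}+t)\psi_{n+1}\bigr)$ from (II) and, after using $\kappa_m=\pi^\ast\kappa_m+\psi_{n+1}^m$ in (I), the factor $\exp\bigl(\sum_{m\ge1}\tfrac{(-x)^m}{m}p_m(\tfrac{s_0}{r},\dots,\tfrac{s_0}{r}+N-1)\psi_{n+1}^m\bigr)=\prod_{t=0}^{N-1}\bigl(1+x(\tfrac{s_0}{r}+t)\psi_{n+1}\bigr)^{-1}$; these are mutually inverse, leaving exactly $\pi^\ast\Omega(r,s;a)$. (More conceptually, (iv) also follows from the square relating $\overline{\mathcal{M}}_{g;(a,s)}^{r,s}$, $\overline{\mathcal{M}}_{g;a}^{r,s}$, $\overline{\mathcal{M}}_{g,n+1}$ and $\overline{\mathcal{M}}_{g,n}$ being cartesian: on the universal curve $\omega_{\log}^{\otimes s}(-s\,p_{n+1})$ is pulled back from $\omega_{\log}^{\otimes s}$, so the new marking of weight $a_{n+1}=s$ carries the pulled-back root and both $R^\bullet\pi_\ast\mathcal{L}$ and $\epsilon$ base-change.) The string and dilaton equations (v), (vi) are then the usual arguments, now legitimate thanks to (iv): for (v) one sets $x_{n+1}=0$, so the factor $(1-x_{n+1}\psi_{n+1})^{-1}$ becomes $1$, replaces $\Omega(r,s;a,s)$ by $\pi^\ast\Omega(r,s;a)$, and pushes forward along $\pi$ via the projection formula, reducing the claim to the classical identity $\pi_\ast\prod_{i=1}^n(1-x_i\psi_i)^{-1}=(x_1+\dots+x_n)\prod_{i=1}^n(1-x_i\psi_i)^{-1}$, itself a consequence of $\psi_i=\pi^\ast\psi_i+D_{i,n+1}$, $\psi_iD_{i,n+1}=0$, $D_{i,n+1}^2=-\pi^\ast\psi_i\,D_{i,n+1}$ and $\pi_\ast D_{i,n+1}=1$; for (vi) one differentiates in $x_{n+1}$ and sets $x_{n+1}=0$, producing a factor $\psi_{n+1}$, and since $\psi_{n+1}D_{i,n+1}=0$ gives $\psi_{n+1}\psi_i^k=\psi_{n+1}(\pi^\ast\psi_i)^k$, the integrand becomes $\psi_{n+1}\cdot\pi^\ast\bigl(\Omega(r,s;a)\prod_{i=1}^n(1-x_i\psi_i)^{-1}\bigr)$, whence the projection formula with $\pi_\ast\psi_{n+1}=\kappa_0=2g-2+n$ yields the factor $2g-2+n$.

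I expect (i)--(iii) and their iterations (I), (II) to be routine; the genuine obstacle is (iv). Carrying it out cleanly requires either making precise what the unit axiom of \cite{LPSZ17} provides — pinning down the unit and, above all, handling $s$ outside $[0,r]$, which forces the bookkeeping above — or, in the geometric route, checking that the relevant square of $r$-th root moduli stacks is cartesian and that both the formation of $R^\bullet\pi_\ast\mathcal{L}$ and the pushforward $\epsilon_\ast$ commute with that base change, where the $\ZZ/r$-stabilisers introduced at the new node (in the sense of \cite{JPPZ17}) must be tracked with care. Once (iv) is in place, (v) and (vi) present no further difficulty.
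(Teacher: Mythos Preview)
Your proposal is correct and follows essentially the same route as the paper: the Bernoulli identity $B_{m+1}(y+1)-B_{m+1}(y)=(m+1)y^m$ for the shifts (i), (ii), (I), (II); trivial evaluation for (iii); the CohFT flat-unit axiom from \cite{LPSZ17} combined with (I), (II) and $\pi^\ast\kappa_m=\kappa_m-\psi_{n+1}^m$ to extend (iv) beyond $0\le s\le r$; and the standard projection-formula manipulations with the divisors $D_{i,n+1}$ for (v) and (vi). The only cosmetic differences are that the paper proves (I), (II) directly and then specialises to (i), (ii), treats the cases $s\ge r$ and $s<0$ separately via elementary versus complete homogeneous symmetric functions in Newton's identities, and justifies pulling the correction factor through $\epsilon_\ast$ by the stable-graph expansion of Corollary~\ref{cor:Omega:Exp} rather than by your projection-formula argument.
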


\begin{rem}
	Property \eqref{prop:shift:s} was employed in \cite[Appendix~A]{CMS+19} for the case $r=1$, we extend it for general $r$. Property \eqref{prop:shift:ai} was observed by A.~Chiodo via geometric arguments for $s<0$, we extend it to general $s$. Property \eqref{prop:0:r:sym} is obvious, we list it here for completeness. Property \eqref{prop:pullback} was proved in \cite{LPSZ17} for $0 \leq s \leq r$, we extend it for arbitrary integer $s$, and it constitutes the strongest statement of the list. Properties \eqref{prop:string} and \eqref{prop:dilaton} have been proved in \cite{DLN16} for $s=0$ via topological recursion techniques, we extend it here to an arbitrary integer $s$. The other properties are new, to the best of our knowledge.
\end{rem}

\begin{rem}\label{rem:Chern:Segre}
	Another interesting property, which only holds for $r=1$, is a relation between two different parametrisations of $\Omega$-classes, which we refer to as \emph{Segre and Chern}:\footnote{The relation that one might expect from Serre duality applied to an $r$-th root of $\omega_{\log}^{\otimes s}(-\sum_i a_i p_i)$, i.e.
		\begin{equation*}
			\Ch^{[-x]}(r,r-s;r-a_1, \dots, r-a_n)
			=
			\left(\Ch^{[x]}(r,s;a_1, \dots, a_n)\right)^{-1},
		\end{equation*}
		is in fact false. As an explicit counterexample, in $(g,n) = (1,2)$ we have $\Omega^{[x]}(2,1;0,2)\Omega^{[-x]}(2,1;2,0) = 1 - \frac{3}{4}x^2\kappa_2$. However, experimentally we find the vanishing $[\deg_{\CC} = k].\Ch^{[-x]}(r,r-s;r-a_1, \dots, r-a_n)\Ch^{[x]}(r,s;a_1, \dots, a_n) = 0$ for $k$ odd.
	}
	\begin{equation}
		\Ch^{[-x]}(1,1-s;0, \dots, 0)
		=
		\left(\Ch^{[x]}(1,s;0, \dots, 0)\right)^{-1} .
	\end{equation}
	It has been proved and employed in \cite{CMS+19}.
\end{rem}

\begin{proof}
	Most equations can be proved exploiting properties of Bernoulli polynomials tuned in the right way. We proceed by grouping similar properties.

	\subsubsection*{Proof of properties \eqref{prop:mult:shifts:s} and \eqref{prop:mult:shifts:ai}, which specialise to \eqref{prop:shift:s} and \eqref{prop:shift:ai}}
	Let us recall a few basic facts from the theory of symmetric functions. Let $p_m$, $\sigma_l$, and $h_l$ be the following three bases of symmetric polynomials: power sums, elementary symmetric, and complete homogeneous. Explicitly, for a set of variables $X = (X_1, \dots, X_N)$ we have:
	\begin{equation}
		p_m(X) =
			\sum_{i=1}^N X_i^m,
		\qquad\qquad 
		\sigma_l(X) =
		\!\!\!\!\!\!\!\! \sum_{1 \leq i_1 < \cdots < i_l \leq N} \!\!\!\!\!\!\!\! X_{i_1} \cdots X_{i_l}, 
		\qquad\qquad
		h_l(X) =
		\!\!\!\!\!\!\!\! \sum_{1 \leq i_1 \leq \cdots \leq i_l \leq N} \!\!\!\!\!\!\!\! X_{i_1} \cdots X_{i_l}. 
	\end{equation}
	The generating series of the $\sigma_l$ and of the $h_l$ read
	\begin{equation}\label{eqn:gen:series:symm:poly}
		\sum_{l \ge 0} \sigma_l(X) u^l = \prod_{i=1}^N (1 + X_i u), 
		\qquad \qquad 
		\sum_{l \ge 0} h_l(X) u^l = \prod_{i=1}^N \frac{1}{(1 - X_i u)},
	\end{equation}
	and are related to the power sums by Newton's identities:
	\begin{equation}\label{eqn:Newton:id}
		\exp\Biggl(\sum_{m \ge 1} \frac{(-1)^{m+1}}{m} p_m u^m \Biggr) = \sum_{l \ge 0} \sigma_l u^l, 
		\qquad \qquad 
		\exp\Biggl(\sum_{m \ge 1} \frac{1}{m} p_m u^m \Biggr) = \sum_{l \ge 0} h_l  u^l.
	\end{equation}
	Let us moreover recall that the Bernoulli polynomials satisfy $B_{m+1}(x + 1) = B_{m+1}(x) + (m+1)x^m$ for any non-negative integer $m$ and any complex variable $x$. For a positive integer $N$, we can iterate this property $N$ times to obtain
	\begin{equation*}
		B_{m+1}(x + N) = B_{m+1}(x) + (m+1) p_m(x, x+1, \dots, x + N - 1).
	\end{equation*}
	We can apply the property above for $x = s/r$ and obtaining
	\begin{equation*}
		\frac{B_{m+1}(\frac{s + Nr}{r})}{m(m+1)}
		=
		\frac{B_{m+1}(\frac{s}{r})}{m(m+1)}
		+
		\frac{1}{m} p_m\bigl(\tfrac{s}{r},\tfrac{s}{r}+1,\dots,\tfrac{s}{r}+N-1\bigr).
	\end{equation*}
	As a consequence, we find that the $\Omega$-classes before pushforward to $\overline{\mathcal{M}}_{g,n}$ (that is, $\Ch_{g,n}'(r,s;a)$ on $\overline{\mathcal{M}}_{g,a}^{r,s}$) satisfy the shifting property
	\begin{equation*}
		\Ch'^{[x]}_{g,n}(r,s + Nr;a)
		=
		\Ch'^{[x]}_{g,n}(r,s;a)
		\cdot
		\exp{\Biggl(
			\sum_{m \ge 1} \frac{(-x)^{m}}{m}
				p_m\bigl(\tfrac{s}{r},\tfrac{s}{r}+1,\dots,\tfrac{s}{r}+N-1\bigr) \kappa_m
		\Biggr)}.
	\end{equation*}
	Notice that we can now apply the pushforward $\epsilon_{\ast}$ on both sides and obtain the statement \eqref{prop:mult:shifts:s}: in fact, writing the above class as a sum over stable graphs $\Gamma$ (\emph{cf.} Section \ref{sec:Omega:classes}), $\epsilon_{\ast}$ simply acts by multiplication of the factor $r^{2g - 1 - h^1(\Gamma)}$ which depends for fixed $g$ only on the first Betti number $h^1(\Gamma)$ of each of the stable graphs produced, which is left unchanged by the decoration of $\psi$- or $\kappa$-classes. In other words, we find
	\begin{equation*}
		\Ch^{[x]}_{g,n}(r,s+Nr;a)
		=
		\Ch^{[x]}_{g,n}(r,s;a)
		\cdot
		\exp{\Biggl(
			\sum_{m \ge 1} \frac{(-x)^{m}}{m}
				p_m\bigl(\tfrac{s}{r},\tfrac{s}{r}+1,\dots,\tfrac{s}{r}+N-1\bigr) \kappa_m
		\Biggr)}
	\end{equation*}
	on $H^{\textup{even}}(\overline{\mathcal{M}}_{g,n})$. This proves property \eqref{prop:mult:shifts:s}, which restricts to property \eqref{prop:shift:s} for $N = 1$. Property \eqref{prop:mult:shifts:ai} is proved similarly, but this time employing instead Equations \eqref{eqn:gen:series:symm:poly} and \eqref{eqn:Newton:id} for elementary symmetric polynomials. Property \eqref{prop:mult:shifts:ai} specialises to property \eqref{prop:shift:ai}.

	\subsubsection*{Proof of property \eqref{prop:0:r:sym}}
	Property \eqref{prop:0:r:sym} is obvious from Equation \eqref{eqn:Chiodo:formula} and the identity for Bernoulli polynomials $B_{m+1}(1) = B_{m+1}(0) = B_{m+1}$.

	\subsubsection*{Proof of property \eqref{prop:pullback}}
	Whenever $s$ lies within the range $0 \leq s < r$, by \cite{LPSZ17} the $\Omega$-classes form a CohFT with flat unit, which can be restated precisely as:
	\begin{equation*}
		\pi^{\ast}\Ch_{g,n}(r,s;a_1,\dots,a_n)
		=
		\Ch_{g,n+1}(r,s;a_1,\dots,a_n,s).
	\end{equation*}
	The case $s=r$ is handled by property \eqref{prop:0:r:sym}. We need to perform the extension of $s$ outside the range $[0,r]$ and show that the statement keeps holding true. For this purpose, we start with $s$ outside the range, and we shift $s$ by adding or subtracting $r$ the required amount of times, controlling the shift process by property \eqref{prop:shift:s}. At this point, we perform the pullback of the correction produced and recognise that it gets perfectly reabsorbed this time by means of property \eqref{prop:shift:ai}:
	\begin{equation*}
		\Ch_{g,n}(r,s;a)
		=
		\begin{cases}
			{\displaystyle
				\Ch_{g,n}(r,\braket{s};a)
				\exp{\Biggl(
					\sum_{m \ge 1} \frac{(-1)^{m}}{m}
						p_m\bigl(\tfrac{\braket{s}}{r},\tfrac{\braket{s}}{r}+1,\dots,\tfrac{s}{r}-1\bigr) \kappa_m
				\Biggr)}
			}
			& \text{if $s \ge r$}, \\[3ex]
			{\displaystyle
				\Ch_{g,n}(r,\braket{s};a)
				\exp{\Biggl(
					- \sum_{m \ge 1} \frac{1}{m}
						p_m\bigl(1-\tfrac{\braket{s}}{r},2-\tfrac{\braket{s}}{r},\dots,-\tfrac{s}{r}\bigr) \kappa_m
				\Biggr)}
			}
			& \text{if $s < 0$}.
		\end{cases}
	\end{equation*}
	Here we wrote $s = r[s]+\braket{s}$ for the Euclidean division of $s$ by $r$. Let us focus on the case $s \ge r$. Pulling-back by the forgetful map, we find
	\begin{equation*}
	\begin{split}
		\pi^{\ast}\Ch_{g,n}&(r,s;a_1,\dots,a_n) = \\
			= &
			\pi^{\ast} \Ch_{g,n}(r,\braket{s};a_1,\dots,a_n)
			\exp{\Biggl(
				\sum_{m \ge 1} \frac{(-1)^{m}}{m}
					p_m\bigl(\tfrac{\braket{s}}{r},\tfrac{\braket{s}}{r}+1,\dots,\tfrac{s}{r}-1\bigr) \pi^{\ast}\kappa_m
			\Biggr)} \\
			= &
			\Ch_{g,n+1}(r,\braket{s};a_1,\dots,a_n,\braket{s})
			\exp{\Biggl(
				\sum_{m \ge 1} \frac{(-1)^{m}}{m}
					p_m\bigl(\tfrac{\braket{s}}{r},\tfrac{\braket{s}}{r}+1,\dots,\tfrac{s}{r}-1\bigr)
					\bigl( \kappa_m - \psi_{n+1}^m \bigr)
			\Biggr)}.
	\end{split}
	\end{equation*}
	Here we used the fact $\pi^{\ast}$ is a ring homomorphism, together with the flat unit property for $\Ch_{g,n}(r,\braket{s})$ and the relation $\pi^{\ast}\kappa_m = \kappa_m - \psi_{n+1}^m$. We can now absorb the exponential of $\kappa$-classes, shifting the $\Omega$-class from $\braket{s}$ back to $s$, then apply Newton's identity~\eqref{eqn:Newton:id} and the generating series for elementary symmetric polynomials~\eqref{eqn:gen:series:symm:poly}:
	\begin{equation*}
	\begin{split}
		\pi^{\ast}\Ch_{g,n}&(r,s;a_1,\dots,a_n) = \\
			= &\,
			\Ch_{g,n+1}(r,s;a_1,\dots,a_n,\braket{s})
			\exp{\Biggl(
				\sum_{m \ge 1} \frac{(-1)^{m+1}}{m}
					p_m\bigl(\tfrac{\braket{s}}{r},\tfrac{\braket{s}}{r}+1,\dots,\tfrac{s}{r}-1\bigr) \psi_{n+1}^m
			\Biggr)} \\
			= &\,
			\Ch_{g,n+1}(r,s;a_1,\dots,a_n,\braket{s})
			\sum_{l \ge 0} \sigma_l\bigl(\tfrac{\braket{s}}{r},\tfrac{\braket{s}}{r}+1,\dots,\tfrac{s}{r}-1\bigr) \psi_{n+1}^l \\
			= &\,
			\Ch_{g,n+1}(r,s;a_1,\dots,a_n,\braket{s})
			\prod_{t = 1}^{[s]} \left( 1 + \left( \frac{s}{r} - t \right) \psi_{n+1} \right)\\
			= &\,
			\Ch_{g,n+1}(r,s;a_1,\dots,a_n,s).
	\end{split}
	\end{equation*}
	In the last equation we used again property \eqref{prop:shift:ai}. This proves the case $s>r$. The case $s < 0$ is obtained by applying Newton's identity for the generating series for complete homogeneous polynomials $h_l$. This concludes the proof of property \eqref{prop:pullback}.

	\subsubsection*{Proof of properties \eqref{prop:string} and \eqref{prop:dilaton}: string and dilaton equations.}
	These are essentially corollaries of property \eqref{prop:pullback} applied to the standard way of proving string and dilaton equations through projection formula.
	For instance, the dilaton equation can be proved as 
	\begin{align*}
		\frac{\partial}{\partial x_{n+1}}
		&
		\int_{\overline{\mathcal{M}}_{g, n+1}}
			\frac{\Omega(r,s; a_1, \dots, a_{n}, s)}{\prod_{i=1}^{n+1} (1 - x_i \psi_i)} \Bigg{|}_{x_{n+1} = 0}
		=
		\int_{\overline{\mathcal{M}}_{g, n+1}}
			\frac{\Omega(r,s; a_1, \dots, a_{n}, s)}{\prod_{i=1}^{n} (1 - x_i \psi_i)} \psi_{n+1} \\
		& =
		\int_{\overline{\mathcal{M}}_{g, n+1}}
			\pi^{\ast}\big[\Omega(r,s; a_1, \dots, a_{n})\big] \prod_{i=1}^n \sum_{d_i \ge 0} x_i^{d_i}\psi_i^{d_i} \cdot \psi_{n+1}\\
		& =
		\int_{\overline{\mathcal{M}}_{g, n+1}}
			\pi^{\ast}\big[\Omega(r,s; a_1, \dots, a_{n})\big] \prod_{i=1}^n \sum_{d_i \ge 0} x_i^{d_i}\left(\pi^{\ast}\psi_i - D_{i,n+1}\right)^{d_i} \cdot \psi_{n+1} \\
		& =
		\int_{\overline{\mathcal{M}}_{g, n+1}}
			\pi^{\ast}\big[\Omega(r,s; a_1, \dots, a_{n})\big] \prod_{i=1}^n \sum_{d_i \ge 0} x_i^{d_i}\left(\pi^{\ast}(\psi_i^{d_i}) - D_{i,n+1}\pi^{\ast}(\psi_i^{d_i-1})\right) \cdot \psi_{n+1} \\
		& =
		\int_{\overline{\mathcal{M}}_{g, n+1}}
			\pi^{\ast}\big[\Omega(r,s; a_1, \dots, a_{n})\big] \prod_{i=1}^n \sum_{d_i \ge 0} x_i^{d_i}\pi^{\ast}(\psi_i^{d_i})  \cdot \psi_{n+1} \\
		& =
		\int_{\overline{\mathcal{M}}_{g, n+1}}
			\pi^{\ast}\Bigg[\Omega(r,s; a_1, \dots, a_{n}) \prod_{i=1}^n \sum_{d_i \ge 0} x_i^{d_i}\psi_i^{d_i} \Bigg]  \cdot \psi_{n+1}
		=
		\int_{\overline{\mathcal{M}}_{g, n}} \frac{\Omega(r,s; a_1, \dots, a_{n})}{\prod_{i=1}^{n} (1 - x_i \psi_i)} \pi_{\ast} \psi_{n+1} \\
		& =
		\int_{\overline{\mathcal{M}}_{g, n}}
			\frac{\Omega(r,s; a_1, \dots, a_{n})}{\prod_{i=1}^{n} (1 - x_i \psi_i)} \kappa_0
		=
		(2g - 2 + n) \int_{\overline{\mathcal{M}}_{g, n}} \frac{\Omega(r,s; a_1, \dots, a_{n})}{\prod_{i=1}^n (1 - x_i \psi_i)}.
	\end{align*}
	Here $D_{i,n+1}$ is the divisor given by the locus of curves with a rational component attached by a single node and containing the two marked points $i$ and $n+1$, satisfying the constraints $D_{i,n+1}D_{j,n+1} = D_{i,n+1}\psi_{n+1} = 0$. Here the convention that negative powers of $\psi$-classes are zero is used. This concludes the proof of property \eqref{prop:dilaton}. The string equation or property \eqref{prop:string} is proved in a similar way. This concludes the proof of the theorem.
\end{proof}

\subsection{Some vanishing of the $\Omega$-integrals}

As an application of the properties above, we provide several vanishing results for integral of $\Omega$-classes with weighted $\psi$-classes. Again, we will write $s = r[s]+\braket{s}$ for the Euclidean division of an integer $s$ by a natural number $r$.

%

\begin{thm}\label{thm:vanishing}
	Fix $g,n \geq 0$ integers such that $2g - 2 + n > 0$. Let $r$ and $s$ be integers with $r$ positive, and $1 \le a_1, \ldots, a_n \le r$ integers satisfying the modular constraint $a_1+a_2+\cdots+a_n \equiv (2g-2+n)s \pmod{r}$. We have
	\begin{equation}
		\int_{\overline{\mathcal{M}}_{g,n+1}}
			\!\!\!\!\!
			\Ch_{g,n+1}^{[x]}(r,s;a_1,\dots,a_n,s) = 0 \qquad \text{ for any } s \in \ZZ.
	\end{equation}
\end{thm}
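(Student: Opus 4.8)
The plan is to deduce the vanishing directly from the pullback property \eqref{prop:pullback}, which has already been established for all integers $s$. Write $\pi\colon \overline{\mathcal{M}}_{g,n+1}\to\overline{\mathcal{M}}_{g,n}$ for the morphism forgetting the last marked point. The modular constraint for the $(n+1)$-pointed class with indices $(a_1,\dots,a_n,s)$ reads $a_1+\dots+a_n+s\equiv(2g-1+n)s\pmod r$, which is precisely equivalent to the hypothesis $a_1+\dots+a_n\equiv(2g-2+n)s\pmod r$; so the class $\Ch^{[x]}_{g,n+1}(r,s;a_1,\dots,a_n,s)$ is defined, and property \eqref{prop:pullback} gives
\begin{equation*}
	\Ch^{[x]}_{g,n+1}(r,s;a_1,\dots,a_n,s)=\pi^{\ast}\Ch^{[x]}_{g,n}(r,s;a_1,\dots,a_n).
\end{equation*}

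It then remains to observe that the integral over $\overline{\mathcal{M}}_{g,n+1}$ of any class pulled back along $\pi$ vanishes. This is a pure dimension count: $\dim\overline{\mathcal{M}}_{g,n+1}=\dim\overline{\mathcal{M}}_{g,n}+1$, so the cohomological degree of the fundamental class of $\overline{\mathcal{M}}_{g,n+1}$ is strictly larger than the top degree of $H^{\bullet}(\overline{\mathcal{M}}_{g,n})$; since $\pi^{\ast}$ preserves cohomological degree, the top-degree part of $\pi^{\ast}\Ch^{[x]}_{g,n}(r,s;a_1,\dots,a_n)$ — the only part contributing to the integral — is zero. Equivalently, by the projection formula
\begin{equation*}
	\int_{\overline{\mathcal{M}}_{g,n+1}}\pi^{\ast}\alpha=\int_{\overline{\mathcal{M}}_{g,n}}\alpha\cdot\pi_{\ast}(1)=0,
\end{equation*}
since $\pi$ has one-dimensional fibres and hence $\pi_{\ast}(1)$ lives in cohomological degree $-2$.

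Since \eqref{prop:pullback} holds for every $s\in\ZZ$, this proves the statement in full generality. I expect no real obstacle here: all the substance has already been absorbed into the extension of the pullback property to arbitrary $s$. A direct attack — expanding $\Ch^{[x]}_{g,n+1}(r,s;a_1,\dots,a_n,s)$ via Chiodo's formula or Corollary \ref{cor:Omega:Exp} and cancelling stable-graph contributions by hand — is possible but far more laborious and unilluminating, and I would avoid it.
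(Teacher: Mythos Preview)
Your proof is correct and follows exactly the paper's own argument: invoke property \eqref{prop:pullback} to write the class as a pullback along $\pi$, then use the dimension mismatch (equivalently, $\pi_\ast 1=0$) to conclude the integral vanishes. The only additions you make are the explicit check of the modular constraint and the projection-formula reformulation, both of which are fine embellishments of the same one-line idea.
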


\begin{proof} 
	The proof is an immediate consequence of property \eqref{prop:pullback}, as the pullback $\pi^{\ast}$ preserves the cohomological degree, whereas the target moduli space is higher in dimension.
\end{proof}
	
By employing Properties \eqref{prop:mult:shifts:s} and \eqref{prop:mult:shifts:ai} above, we obtain the following vanishing.

\begin{cor} ~
	\begin{enumerate}[(i)]
		\item If $s \geq r$ we have:
		\begin{equation}
		\begin{aligned}
			\int_{\overline{\mathcal{M}}_{g,n+1}} &
				\Ch_{g,n+1}^{[x]}(r,s;a_1,\dots,a_n,\braket{s})
				\prod_{t = 1}^{[s]} \left( 1 + \left( \frac{s}{r} - t \right) \psi_{n+1} x \right)
			= 0 \\
			\int_{\overline{\mathcal{M}}_{g,n+1}} &
				\Ch_{g,n+1}^{[x]}(r,\braket{s};a_1,\dots,a_n, s)
				\exp{\biggl(
					\sum_{m \ge 1} \frac{(-1)^{m}}{m}
					p_m\bigl(\tfrac{\braket{s}}{r},\tfrac{\braket{s}}{r}+1,\dots,\tfrac{s}{r}-1\bigr) \kappa_m x^m
				\biggr)}
			= 0
		\end{aligned}
		\end{equation}

		\item If $s < 0$ we have:
		\begin{equation}
		\begin{aligned}
			\int_{\overline{\mathcal{M}}_{g,n+1}} &
				\Ch_{g,n+1}^{[x]}(r,s;a_1,\dots,a_n,\braket{s})
				\prod_{t = 0}^{-[s]- 1} \left( 1 + \left( \frac{s}{r} + t \right) \psi_{n+1} x \right)^{-1}
			= 0 \\
			\int_{\overline{\mathcal{M}}_{g,n+1}} &
				\Ch_{g,n+1}^{[x]}(r,\braket{s};a_1,\dots,a_n,s)
				\exp{\biggl( -
					\sum_{m \ge 1} \frac{(-1)^{m}}{m}
					p_m\bigl(\tfrac{\braket{s}}{r},\tfrac{\braket{s}}{r}+1,\dots,\tfrac{s}{r}-1\bigr) \kappa_m x^m
				\biggr)}
			= 0
		\end{aligned}
		\end{equation}
	\end{enumerate}
\end{cor}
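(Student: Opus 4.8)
The plan is to obtain all four identities as direct consequences of Theorem~\ref{thm:vanishing} together with the iterated shift properties~\eqref{prop:mult:shifts:s} and~\eqref{prop:mult:shifts:ai}. The key remark is that in each of the four statements the integrand is simply the class $\Ch_{g,n+1}^{[x]}(r,s;a_1,\dots,a_n,s)$ — whose integral over $\overline{\mathcal{M}}_{g,n+1}$ vanishes by Theorem~\ref{thm:vanishing} — rewritten by sliding either the last marked-point decoration or the twist parameter between the integer $s$ and its residue $\braket{s}$. Since $s\equiv\braket{s}\pmod r$, every tuple that appears is admissible (one invokes~\eqref{prop:0:r:sym} in the borderline case $\braket{s}=0$), so the only fact needed beyond the two cited properties is that the correction factors produced by a shift are units in $H^{\textup{even}}(\overline{\mathcal{M}}_{g,n+1})$ — polynomials in $\psi_{n+1}$ with constant term $1$, or exponentials of $\kappa$-classes — and may therefore be moved to the opposite side of the identity before integrating.

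Concretely, for $s\ge r$ I would write $s=\braket{s}+[s]\,r$ with $[s]\ge 1$ and apply~\eqref{prop:mult:shifts:ai} to the $(n+1)$-st decoration with $N=[s]$; this expresses $\Ch_{g,n+1}^{[x]}(r,s;a_1,\dots,a_n,s)$ as $\Ch_{g,n+1}^{[x]}(r,s;a_1,\dots,a_n,\braket{s})$ times $\prod_{t=0}^{[s]-1}\bigl(1+x(\tfrac{\braket{s}}{r}+t)\psi_{n+1}\bigr)$, and the substitution $t\mapsto[s]-t$ recasts the product as $\prod_{t=1}^{[s]}\bigl(1+x(\tfrac{s}{r}-t)\psi_{n+1}\bigr)$; integrating now gives the first identity of~(i). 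Replacing~\eqref{prop:mult:shifts:ai} by~\eqref{prop:mult:shifts:s} with the same $N=[s]$ (and using $\tfrac{\braket{s}}{r}+[s]-1=\tfrac{s}{r}-1$) expresses the same class as $\Ch_{g,n+1}^{[x]}(r,\braket{s};a_1,\dots,a_n,s)$ times the exponential of $\kappa$-classes displayed in the second identity of~(i), which then follows by integration. The case $s<0$ is symmetric: here $[s]\le-1$ and $\braket{s}=s+(-[s])\,r$, so I would run~\eqref{prop:mult:shifts:ai} and~\eqref{prop:mult:shifts:s} with $N=-[s]$ to slide, respectively, the decoration and the twist from $s$ up to $\braket{s}$; the corrections are exactly the product $\prod_{t=0}^{-[s]-1}\bigl(1+x(\tfrac{s}{r}+t)\psi_{n+1}\bigr)$ and the exponential appearing in~(ii), whose inverses are precisely the factors written in the statement, so that both integrands reduce on the nose to $\Ch_{g,n+1}^{[x]}(r,s;a_1,\dots,a_n,s)$ and the vanishing is Theorem~\ref{thm:vanishing} again.

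I do not anticipate a genuine obstacle: the whole content sits in Theorem~\ref{thm:vanishing} and the two shift properties, and what remains is purely bookkeeping. The fiddliest point is keeping the indexings straight — the products in the statement run over $t=1,\dots,[s]$ or $t=0,\dots,-[s]-1$ while~\eqref{prop:mult:shifts:ai} naturally yields a product over $t=0,\dots,N-1$, so one must track the reindexing and the attendant sign flips, and similarly interpret the power sum $p_m(\tfrac{\braket{s}}{r},\dots,\tfrac{s}{r}-1)$ (whose ``endpoints'' are written in the order opposite to the one produced by~\eqref{prop:mult:shifts:s}) as the power sum of the arguments lying strictly between them. One should also record once and for all that the modular constraint survives the addition of the $(n+1)$-st point of type $s$ or $\braket{s}$, which is immediate since $s\equiv\braket{s}\pmod r$.
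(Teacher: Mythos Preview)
Your proposal is correct and matches the paper's approach exactly: the paper's proof is the single sentence ``By employing Properties~\eqref{prop:mult:shifts:s} and~\eqref{prop:mult:shifts:ai} above, we obtain the following vanishing,'' and you have simply spelled out the bookkeeping (including the reindexing $t\mapsto[s]-t$ and the endpoint convention for the power sums) that this sentence leaves implicit. There is no gap.
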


\begin{rem}
 	The statements above can be expressed in terms of the generalised Stirling numbers $\mathsf{s}$ and $\mathsf{S}$ of first and second type respectively (see e.g. \cite{Cha18}):
 	\begin{equation}
 	\begin{aligned}
 		&
 		\int_{\overline{\mathcal{M}}_{g,n+1}}
 			\!\!\!\!\!
 			\Ch_{g,n+1}(r,s;a_1,\dots,a_n,\braket{s})
 			\sum_{m \ge 0} (-1)^{[s]} \mathsf{s}\bigl( [s], [s] - m,\tfrac{\braket{s}}{r} \bigr) \psi_{n+1}^m
 			= 0
 		&\quad \text{for $s > r$,} \\
 		&
 		\int_{\overline{\mathcal{M}}_{g,n+1}}
 			\!\!\!\!\!
 			\Ch_{g,n+1}(r,s;a_1,\dots,a_n,\braket{s})
 			\sum_{m \ge 0} \mathsf{S}\bigl( [s] + m, [s], \tfrac{\braket{s}}{r} \bigr) \psi_{n+1}^m
 			= 0 
 		&\quad \text{for $s < 0$.}
 	\end{aligned}
 	\end{equation}
 	By \cite{Cha18}, we have the following expression in terms of the usual Stirling numbers $\stirlingfirst{a}{b}$ and $\stirlingsecond{a}{b}$ of the first and second type respectively:
 	\begin{equation}
 	\begin{aligned}
 		(-1)^{k} \mathsf{s}\bigl( k, k - m, x \bigr)
 			& = 
 			\sum_{i=0}^m \binom{k + i - m}{i}
 				\stirlingfirst{k}{k - m + i} x^i
 		&\qquad \text{for $s > r$,} \\
 		\mathsf{S}\bigl(k + m, k, x \bigr) 
 			& =
  			\sum_{i=0}^m \binom{m - k - 1}{i} (-1)^{i}
 				\stirlingsecond{m - i - k}{ - k} x^i
 		&\qquad \text{for $s < 0$.}
 	\end{aligned}
	\end{equation}
\end{rem}

\appendix
\section{A technical lemma}

The following lemma appears in some form in the literature, see for instance \cite[Lemma~2.3]{Pix13}. We provide nevertheless a proof here for reader's convenience.

\begin{lem}\label{lem:push:kappa}
	Let $g \geq 0$ and $n > 0$ be integer such that $2g - 2 + n > 0$. Then for any class $\alpha \in H^{\textup{even}}(\overline{\mathcal{M}}_{g,n})$ and any sequence $(u_m)_{m \ge 1}$ of complex numbers, we find
	\begin{equation}
	\begin{split}
		&
		\int_{\overline{\mathcal{M}}_{g,n}}
			\alpha \cdot \prod_{i=1}^{n} \psi_i^{d_i} \exp\Biggr( \sum_{m \ge 1} u_m \kappa_m \Biggl) \\
		&\qquad\qquad\qquad
		=
		\int_{\overline{\mathcal{M}}_{g,n}}
			\alpha \cdot \prod_{i=1}^{n} \psi_i^{d_i}
		+
		\sum_{\ell \ge 1} \frac{1}{\ell!} \sum_{\mu_1,\dots,\mu_\ell \ge 1}
		\int_{\overline{\mathcal{M}}_{g,n + \ell}}
			\pi_{\ell}^{\ast}\alpha
			\cdot
			\prod_{i=1}^{n} \psi_i^{d_i}
			\prod_{j=1}^{\ell} v_{\mu_j} \psi_{n + j}^{\mu_j + 1},
	\end{split}
	\end{equation}
	where the map $\pi_{\ell} \colon \overline{\mathcal{M}}_{g,n + \ell} \to \overline{\mathcal{M}}_{g,n}$ is the morphism forgetting the last $\ell$ marked points, and the sequence $(v_k)_{k \ge 1}$ is obtained from $(u_m)_{m \ge 1}$ from the expansion
	\begin{equation}\label{eqn:change:u:v}
		\exp\Biggl( - \sum_{m \ge 1} u_m x^{m} \Biggr) = 1 - \sum_{k \ge 1} v_{k} x^{k}.
	\end{equation}
\end{lem}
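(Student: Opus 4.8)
The plan is to reduce the statement to the standard behavior of $\kappa$-classes under the forgetful morphism, together with a combinatorial identity relating $\kappa$-monomials to $\psi$-classes on moduli spaces with extra marked points. The starting point is the pushforward formula of Kaufmann--Manin--Zagier / Arbarello--Cornalba: if $\pi_\ell \colon \overline{\mathcal{M}}_{g,n+\ell} \to \overline{\mathcal{M}}_{g,n}$ forgets the last $\ell$ points, then $\kappa_m$ on $\overline{\mathcal{M}}_{g,n}$ is related to $\pi_{\ell,*}$ of monomials in $\psi_{n+1},\dots,\psi_{n+\ell}$. More precisely, I would use the single-point case $\kappa_m = \pi_{1,*}(\psi_{n+1}^{m+1})$ (valid after pulling back $\alpha$ and $\prod \psi_i^{d_i}$, since $\pi_1^*\psi_i = \psi_i - D_{i,n+1}$ but the correction terms die against $\psi_{n+1}^{m+1}$ for $m \ge 1$, exactly as in the dilaton computation in the proof of Theorem~\ref{thm:properties}). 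Iterating and symmetrizing gives, for a monomial $\kappa_{m_1}\cdots\kappa_{m_\ell}$, a pushforward expression as a sum over set partitions of $\{1,\dots,\ell\}$ of products of $\psi$-powers on $\overline{\mathcal{M}}_{g,n+\ell'}$ with $\ell'$ the number of blocks.

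First I would establish the one-variable identity: for a single $\kappa$-insertion,
\begin{equation*}
	\int_{\overline{\mathcal{M}}_{g,n}} \alpha \prod_i \psi_i^{d_i}\,\kappa_m = \int_{\overline{\mathcal{M}}_{g,n+1}} \pi_1^*\alpha \prod_i \psi_i^{d_i}\,\psi_{n+1}^{m+1},
\end{equation*}
and more generally a ``partition'' version expressing $\int \alpha \prod\psi_i^{d_i}\prod_{a=1}^k \kappa_{m_a}$ as a sum over ways of forgetting points: each forgotten point either carries $\psi^{m_a+1}$ (a new block) or, when several $\kappa$'s are ``merged'' onto fewer points via the relation $\pi_{1,*}(\psi_{n+1}^{a+1}\cdot\pi_1^*\kappa_b) = \kappa_a\kappa_b + \kappa_{a+b}$, produces the $\kappa_{a+b}$ correction terms. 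Handling these corrections cleanly is what makes the bookkeeping delicate.

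The key step is then the generating-function manipulation. Writing $\exp(\sum_m u_m\kappa_m)$ and expanding, the sum over monomials organized by the partition structure of the forgotten points should collapse — after accounting for the $\kappa_{a+b}$ merging terms with the right signs and symmetry factors $1/\ell!$ — into the clean answer where each of the $\ell$ forgotten points independently contributes a factor $\sum_{\mu\ge 1} v_\mu \psi^{\mu+1}$. This is precisely where the substitution \eqref{eqn:change:u:v} enters: the relation $\exp(-\sum_m u_m x^m) = 1 - \sum_k v_k x^k$ is the exponential-formula/Möbius inversion that converts the ``connected'' contributions (genuine $\kappa_m$'s, weighted by $u_m$) plus their merging corrections into the ``disconnected'' product form (independent $v_k$'s). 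Concretely, I expect to show that summing over all ways to distribute $\ell$ labelled forgotten points into blocks, with a block of size $j$ contributing $\sum_{m\ge 1}u_m(\text{something of degree } j)$, reproduces the coefficient extraction in \eqref{eqn:change:u:v}; a generating-series argument in an auxiliary variable makes this transparent.

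The main obstacle is the combinatorics of the $\kappa_{a+b}$ correction terms: the naive pushforward of a product of $\kappa$'s does not simply become a product of $\psi$-powers on extra points, and one must verify that after summing over all partitions the corrections assemble exactly into the sign pattern encoded by $\exp(-\sum u_m x^m)$ rather than $\exp(+\sum u_m x^m)$. I would handle this by first proving the statement for $\alpha \prod\psi_i^{d_i}$ replaced by an arbitrary monomial and then noting both sides are linear in $\alpha$; the cleanest route is probably induction on $\ell$ (the number of $\kappa$'s in the expanded exponential), peeling off one forgetful map at a time and using $\pi_1^*\kappa_m = \kappa_m - \psi_{n+1}^m$ together with the projection formula, exactly in the style of the dilaton-equation computation already carried out in the proof of Theorem~\ref{thm:properties}. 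Alternatively one can cite the form in \cite[Lemma~2.3]{Pix13} and only verify the precise change of variables \eqref{eqn:change:u:v}, which is the content actually used in the proof of Theorem~\ref{thm:main}.
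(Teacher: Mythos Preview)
Your proposal is correct in spirit and would eventually reach the goal, but it takes a longer and more laborious route than the paper. The paper short-circuits all of the ``merging'' combinatorics you worry about by invoking, as a single known identity, the relation between the exponential of single-index $\kappa$-classes and the \emph{multi-index} $\kappa$-classes:
\[
	\exp\Biggl(\sum_{m\ge 1} u_m\kappa_m\Biggr)
	= 1 + \sum_{\ell\ge 1}\frac{1}{\ell!}\sum_{\mu_1,\dots,\mu_\ell\ge 1}\Biggl(\prod_{j=1}^\ell v_{\mu_j}\Biggr)\kappa_{\mu_1,\dots,\mu_\ell},
\]
where by definition $\kappa_{\mu_1,\dots,\mu_\ell}=\pi_{\ell,*}\bigl(\psi_{n+1}^{\mu_1+1}\cdots\psi_{n+\ell}^{\mu_\ell+1}\bigr)$ and the $v_k$ are given by \eqref{eqn:change:u:v}. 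This identity already \emph{contains} the exponential-formula bookkeeping you propose to set up (the $\kappa_{a+b}$ corrections, the set-partition sums, the sign pattern); once it is stated, the proof is two lines: apply the projection formula, then argue that in $\pi_\ell^*\bigl(\alpha\prod_i\psi_i^{d_i}\bigr)$ the divisorial corrections $\psi_i-\pi_\ell^*\psi_i$ vanish in the integral by a dimension count on the rational tail (the tail has dimension $|I|-1$ but carries $\psi$-degree at least $|I|$ from the extra points).

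So the difference is one of packaging: you propose to rebuild the multi-index $\kappa$ formalism by induction on $\ell$ and $\pi_1^*\kappa_m=\kappa_m-\psi_{n+1}^m$, whereas the paper simply quotes it and moves on. Your approach buys self-containment at the cost of length; the paper's buys brevity by outsourcing the combinatorics. The dimension argument for killing the $D_{i,n+1}$ corrections is the same in both, and you identified it correctly. If you want a clean write-up, state the multi-index identity above as a lemma (or cite it), then the rest is immediate.
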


\begin{proof}
	Let us first recall that multi-indexes $\kappa$-classes and single-index $\kappa$-classes are related by 
	\begin{equation*}
		\exp\Biggl( \sum_{m \ge 1} u_m \kappa_m \Biggr)
		=
		1 + \sum_{\ell \ge 1} \frac{1}{\ell!}
			\sum_{\mu_1, \dots,\mu_{\ell} \ge 1} \Biggl( \prod_{j=1}^{\ell} v_{\mu_j} \Biggr) \kappa_{\mu_1,\dots,\mu_{\ell}},
	\end{equation*}
	where $u_m $ and $v_{k}$ are related by the expansion \eqref{eqn:change:u:v}. As by definition $\kappa_{\mu_1,\dots,\mu_{\ell}} = \pi_{\ell,\ast} \bigl(\psi_{n+1}^{\mu_{1} + 1} \cdots \psi_{n+\ell}^{\mu_{\ell} + 1}\bigr)$, the projection formula implies that
	\begin{align*}
		\int_{\overline{\mathcal{M}}_{g,n}} \alpha \cdot \prod_{i=1}^{n} \psi_i^{d_i} \exp\Biggr( \sum_{m \ge 1} u_m \kappa_m \Biggl)
		= &
		\int_{\overline{\mathcal{M}}_{g,n}} \alpha \cdot \prod_{i=1}^{n} \psi_i^{d_i} \\
		& +
		\sum_{\ell \ge 1} \frac{1}{\ell!} \sum_{\mu_1, \dots,\mu_{\ell} \ge 1}
			\int_{\overline{\mathcal{M}}_{g,n + \ell}} \pi_{\ell}^{\ast} \left(\alpha \cdot \prod_{i=1}^{n} \psi_i^{d_i} \right) \prod_{j=1}^{\ell} v_{\mu_j} \psi_{n + j}^{\mu_j + 1}.
	\end{align*}
	Observe that 
	\begin{equation*}
		\pi_{\ell}^{\ast}\left(\alpha \cdot \prod_{i=1}^{n} \psi_i^{d_i} \right)
		=
		\pi_{\ell}^{\ast}\alpha \cdot \prod_{i=1}^{n} \Big( \pi_{\ell}^{\ast}\psi_i \Big)^{d_i} 
	\end{equation*}
	and moreover recall that, if $\pi \colon \overline{\mathcal{M}}_{g,n+1} \to \overline{\mathcal{M}}_{g,n}$ is the forgetful map forgetting the $(n+1)$-th marked point, then $\pi^{\ast}(\psi_i) = \psi_i - D_{i,n+1}$ on $\overline{\mathcal{M}}_{g,n+1}$, where $D_{i,n+1}$ is the Poincaré dual of the divisor represented by the curves with a single node separating a rational component with exactly two leaves marked $i $ and $n+1$ from the other component. As a consequence, $\pi_{\ell}^{\ast}(\psi_i) = \psi_i - \bar{D}_{i,n+1}$, where $\bar{D}_{i,n+1}$ is the Poincaré dual of the divisor represented by the curves with a single node separating a rational component decorated by the leaf $i$ and leaves in a non-empty subset $I \subset \set{n+1,\dots,n+\ell}$ from the other component, summing over all such subsets $I$. The important observation is the following: because we are interested in $\kappa$-classes, all the new leaves $n+1, \dots, n+\ell$ obtained by applying the projection formula are decorated by a $\psi$-class to the power at least one. By a dimension argument, a rational component attached to a single node and decorated by the leaves $i$ and $I$ has dimension $|I| - 1$, whereas the $\psi$-classes decorating that component have total degree at least $|I|$. This proves that the terms involving the classes $\bar{D}_{i,n+1}$ vanish in the integral, therefore proving the statement.
\end{proof}

\printbibliography

\end{document}